\newtheorem{teo}{Theorem}
\newtheorem{lem}{Lemma}
\newtheorem{prop}{Proposition}
\newtheorem{cor}{Corollary}
\newtheorem*{conj*}{Conjecture}
\newtheorem*{teop}{Main Theorem}
\theoremstyle{definition}
\newtheorem*{ftm*}{Fuchsian $3$-Manifolds}
\newtheorem{remark}{Remark}
\title[On free boundary minimal annuli in geodesic balls of $\mathbb{S}^3_+$ and $\mathbb{H}^3$]{On uniqueness of free boundary minimal annuli in geodesic balls of $\mathbb{S}^3_+$ and $\mathbb{H}^3$ \ \ }
\author{C\'esar Lima}
\address{Instituto de Matem\'atica e Estat\'istica\\ Universidade Federal do Rio Grande do Sul\\ Porto Alegre - RS /  Instituto Federal do Cear\'a \\ Morada Nova - CE}
\email{cesar.lima@ifce.edu.br}
\begin{document}

\maketitle

\begin{abstract}
We consider $\Sigma$ an embedded free boundary minimal annulus in a geodesic ball in the round hemisphere $\mathbb{S}^3_+$ or in the hyperbolic space $\mathbb{H}^3$. Under the hypothesis of invariance due to an antipodal map on the geodesic ball and using the fact that this surface satisfies the Steklov problem with frequency, we prove that $\Sigma$ is congruent to a cri\-ti\-cal rotational annulus.
\end{abstract}

\providecommand{\abs}[1]{\lvert#1\rvert}

\linespread{1} 

\section{Introduction.}

Let $M$ be a Riemannian manifold with boundary and $\Sigma\subset M$ a compact submanifold that satisfies the boundary condition $\partial \Sigma\subset \partial M$. We say $\Sigma$ is a free boundary minimal submanifold when the mean curvature of the immersion is zero and that $\Sigma$ meets the boundary of $M$ orthogonally. In \cite{Ku-McG}, Kusner and McGrath  proved that a free boundary minimal annulus in the unit ball of $\mathbb{R}^3$, which is invariant by the antipodal map, is congruent to the critical catenoid. This is a particular case of the Fraser-Li conjecture \cite{Fr-Li}, which states that \textit{Up to congruence, the critical catenoid is the only embedded free boundary minimal annulus in $\mathbb{B}^3$.}

Our goal in this paper is to obtain a generalization of this result to spaces of constant curvature, more precisely, we will establish a characterization for an embedded free boundary minimal annulus in a geodesic ball in the round hemisphere and in the hyperbolic space, which is invariant by a map equivalent to the antipodal map in the geodesic ball. 

We recall the work of Otsuki \cite{Ot}, Mori \cite{Mo} and Carmo and Dajczer \cite{Ca-Da}, where they constructed a family of parametrized rotationaly invariant minimal surfaces in spaces of constant curvature. As observed by Li and Xiong \cite{Li-Xi} it is possible to restrict this family of parameterized surfaces to see the remaining elements, that we will call \textit{critical rotational annulus}, as free boundary minimal annulus in a geodesic ball. Denoting the coordinates of $\mathbb{R}^4$ as $(x,y)$, $x \in \mathbb{R}$ and $y \in \mathbb{R}^3$, due to the symmetries of spaces, we can focus on geodesic balls centered on the \textit{north pole}, the point $(1,0)$, which will be denoted by $\mathbb{B}^3_\varepsilon(r)$, where $r$ is the radius and the value of $\varepsilon$ indicates the round hemisphere, when $\varepsilon = 1$, and the hyperbolic space, when $\varepsilon = -1$. With this notation, the antipodal map in $\mathbb{B}^3_\varepsilon(r)$ is defined by $A(x,y) = (x,-y)$. Our goal is then to prove the following theorem:

\begin{teop}\label{teoprincipal}
Let $\Sigma\hookrightarrow \mathbb{B}^3_\varepsilon(r)$ be an embedded free boundary minimal annulus, where $r \in (0,\pi/2)$ if $\varepsilon = 1$. If $\Sigma$ it is invariant by the antipodal map in $\mathbb{B}^3_\varepsilon(r)$, then $\Sigma$ is congruent to a critical rotational annulus.
\end{teop}

A key property used in the proof of the Main Theorem is the connection between free boundary minimal surfaces and variations of the Steklov problem. For proper understanding, the Steklov problem is an eigenvalue problem with the spectral parameter in the boundary conditions, which consists in finding solutions of 
\begin{equation}\label{steklovclassical}
\left\{\begin{array}{ll}
\Delta u = 0, & \textrm{on} \ \Sigma,\\[5pt]
\dfrac{\partial u}{\partial \nu} = \sigma u, & \textrm{in} \ \partial\Sigma,
\end{array}\right.
\end{equation}
where $\Delta$ is the Laplace-Beltrami operator and $\nu$ is the outward unit vector field along $\partial\Sigma$. The theory of the Steklov problem is well established and was used by Fraser and Schoen \cite{Fr-Sc} to show that a free boundary minimal annulus in the closed unit ball in $\mathbb{R}^3$ is congruent to the critical catenoid, under the condition that the first Steklov eigenvalue is equal to one. In \cite{Ku-McG}, Kusner and McGrath observed that under the assumption of antipodal invariance the condition of the first Steklov eigenvalue follows from the two-piece property, which had recently been proven by Lima and Menezes \cite{Li-Me2} for the closed unit ball in $\mathbb{R}^3$. 

In \cite{Li-Me}, Lima and Menezes studied free boundary minimal surfaces in geodesic balls of the round hemisphere. They connected these objects to a Steklov problem with frequency, analyzing  maximizers for the quantity
$$\Theta_r(\Sigma,g) = [\sigma_0\cos^2r + \sigma_1\sin^2r]|\partial \Sigma|_g + 2|\Sigma|_g,$$
where $g$ is a metric in $\Sigma$, $|\Sigma|_g$ denotes the area of $\Sigma$ and $|\partial\Sigma|_g$ is the length of $\partial\Sigma$. In the case of an immersed $2$-dimensional manifold $\Sigma$ in a geodesic ball $\mathbb{B}^n_1(r)$ in the $n$-dimensional round sphere $\mathbb{S}^n$ the frequency is equal to $2$ and the Steklov problem is described as
\begin{equation}
\left\{\begin{array}{rl}
\Delta \varphi_i + 2\varphi_i = 0, & \textrm{in} \ \Sigma, \ i=0,1,\ldots,n,\\[6pt]
\dfrac{\partial\varphi_0}{\partial\nu} =- \tan r \,\varphi_0, & \textrm{on} \ \partial\Sigma,\\[9pt]
\dfrac{\partial\varphi_i}{\partial\nu} = \cot r \,\varphi_i, & \textrm{on} \ \partial\Sigma, \ i=1,\ldots,n,
\end{array}\right.
\end{equation}
where $\varphi_0,\ldots,\varphi_n$ are the coordinate functions. They showed in \cite[Theorem C]{Li-Me} that if $\Sigma$ is an annulus such that $\varphi_i$ is a $\sigma_1$-eigenfunction, for $i=1,\ldots,n$, then $n = 3$ and $\Sigma$ is a critical rotational annulus. This is analogous to the uniqueness result of the critical catenoid proved in \cite{Fr-Sc}, as well as to results of Montiel-Ros \cite{Mo-Ro} and El Soufi-Ilias \cite{El-Il} which caracterize the Clifford torus and the flat equilateral torus. Following the work of Lima and Menezes, Medvedev \cite{Me} used similar techniques to generalize some of their results to the case of surfaces in a geodesic ball in the hyperbolic space. In particular, in \cite[Theorem 5.10]{Me} he establishes that if $\Sigma$ is an annulus such that the coordinate functions $\varphi_i$, $i=1,\ldots,n$, are $\sigma_1$-eigenfunctions, where $\sigma_1$ is related to the Steklov problem in the hyperbolic space, then the surface is congruent to a critical rotational annulus. 

Fern\'andez, Hauswirth and Mira \cite{Fe-Ha-Mi} have recently constructed examples of immersed free boundary minimal annuli in the unit ball of $\mathbb{R}^3$ that are not congruent to the critical catenoid. In \cite{Ce-Fe-Mi}, Cerezo, Fern\'andez and Mira construct a family of free boundary annuli in geodesic balls of $\mathbb{S}^3$ and $\mathbb{H}^3$. Also, in \cite{Ce}, Cerezo construct a family of non-rotational free boundry minimal annuli in geodesic balls of $\mathbb{H}^3$. In view of these results, it was proposed in \cite{Ce-Fe-Mi,Me} the following conjecture:

\begin{conj*}
The critical rotational annuli are the only embedded free boundary minimal annuli in geodesic balls of $\mathbb{S}^3_+$ and $\mathbb{H}^3$.
\end{conj*}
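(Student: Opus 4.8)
The plan is to reduce the Main Theorem to the spectral characterizations already at hand: Theorem C of Lima--Menezes \cite{Li-Me} for $\varepsilon=1$ and Theorem 5.10 of Medvedev \cite{Me} for $\varepsilon=-1$. Each of these asserts that an annulus on which the coordinate functions $\varphi_1,\ldots,\varphi_n$ are $\sigma_1$-eigenfunctions of the relevant Steklov problem with frequency must have $n=3$ and be congruent to a critical rotational annulus. As $\Sigma$ is free boundary minimal in $\mathbb{B}^3_\varepsilon(r)$, its coordinate functions automatically solve the frequency-$2$ problem of \cite{Li-Me} (resp.\ the frequency problem of \cite{Me}), with $\varphi_1,\ldots,\varphi_n$ sharing one eigenvalue $\lambda$ (equal to $\cot r$ when $\varepsilon=1$) and $\varphi_0$ carrying another, $\sigma_0$ (equal to $-\tan r$ when $\varepsilon=1$). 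Since $\varphi_0$ is the restriction of the ambient coordinate $x$, which is positive throughout $\mathbb{B}^3_\varepsilon(r)$, the function $\varphi_0$ is a positive eigenfunction and hence the ground state; thus $\sigma_0$ is the lowest eigenvalue and every $\sigma_1$-eigenfunction is $L^2(\partial\Sigma)$-orthogonal to $\varphi_0$, and in particular sign-changing. The entire proof therefore comes down to the single identity $\sigma_1=\lambda$.

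I would next bring in the antipodal symmetry. Because $A(x,y)=(x,-y)$ is an isometry of $\mathbb{B}^3_\varepsilon(r)$ fixing only the center, preserving $\Sigma$, and commuting with the Steklov operator, both the energy form and $L^2(\partial\Sigma)$ split orthogonally into $A$-even and $A$-odd parts. The functions $\varphi_1,\ldots,\varphi_n$ are $A$-odd and $\varphi_0$ is $A$-even, so $\int_{\partial\Sigma}\varphi_0\varphi_i\,ds=0$ and the $\varphi_i$ are admissible in the variational characterization of $\sigma_1$; this gives the easy inequality $\sigma_1\le\lambda$. The orthogonal splitting also shows that a $\sigma_1$-eigenfunction may be taken purely $A$-even or purely $A$-odd, and by the Courant nodal domain theorem such an eigenfunction, being sign-changing, has exactly two nodal domains. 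One further structural remark is useful: the two boundary circles of $\Sigma$ lie on the geodesic sphere $\partial\mathbb{B}^3_\varepsilon(r)$, on which $A$ acts without fixed points; two disjoint circles there cannot both be $A$-invariant, so $A$ must interchange the two components of $\partial\Sigma$, and $\Sigma/A$ is a M\"obius band.

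It remains to prove the reverse inequality $\sigma_1\ge\lambda$. For an $A$-odd first eigenfunction I would combine the two-piece property — which I will establish in the curved setting, namely that each totally geodesic equatorial sphere (resp.\ plane) through the center cuts $\Sigma$ into exactly two pieces, so that each $\varphi_i$ has precisely two nodal domains interchanged by $A$ — with a center-of-mass balancing argument for the map $(\varphi_1,\ldots,\varphi_n)$ to show that the smallest $A$-odd eigenvalue equals $\lambda$ and is attained exactly by the coordinate functions. For an $A$-even first eigenfunction $u$, the interchange of the boundary circles by $A$ together with orthogonality to $\varphi_0$ forces $\int_{C}u\,\varphi_0\,ds=0$ on each boundary component $C$, so $u$ changes sign on each circle; ruling out any such $u$ with eigenvalue below $\lambda$ then reduces to a spectral-gap estimate for the $A$-even (equivalently, M\"obius-band) Steklov problem. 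Granting both, the $\sigma_1$-eigenspace is exactly the span of $\varphi_1,\ldots,\varphi_n$, so $\sigma_1=\lambda$, and Theorem C of \cite{Li-Me} or Theorem 5.10 of \cite{Me} finishes the proof.

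The two points I expect to be genuinely hard are exactly the two just flagged. First, establishing the two-piece property in $\mathbb{S}^3_+$ and $\mathbb{H}^3$: the Euclidean linear functions that drive the argument of Lima--Menezes \cite{Li-Me2} are not available, so I would instead run a maximum-principle and flux argument adapted to the frequency term, using embeddedness and the annular topology to exclude a third component. Second, and more delicate, is the $A$-even spectral gap: the two-piece property bounds only the number of nodal domains and says nothing directly about even functions, so excluding an even eigenvalue in $(\sigma_0,\lambda)$ requires a separate estimate on the Steklov spectrum of the M\"obius quotient, which I regard as the crux of the whole argument.
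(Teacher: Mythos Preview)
The statement you were asked to address is the \emph{Conjecture}, which the paper explicitly records as open and does not prove. Your proposal silently imports the hypothesis that the antipodal map $A$ preserves $\Sigma$ (``preserving $\Sigma$''), but that is exactly the extra assumption separating the Main Theorem from the Conjecture. Without $A(\Sigma)=\Sigma$ there is no $A$-equivariant splitting of the Steklov problem on $\Sigma$, no reason for the $\varphi_i$ to be $A$-odd as functions on $\Sigma$, and no M\"obius quotient; the whole argument collapses at the first step. As a proof of the Conjecture this is a genuine gap, and indeed it is the entire difficulty: removing the antipodal hypothesis is precisely what remains conjectural.

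Read instead as an attempt at the Main Theorem, your outline diverges from the paper at the two points you yourself flag. The two-piece property in geodesic balls of $\mathbb{S}^3_+$ and $\mathbb{H}^3$ need not be re-proved: the paper quotes it from Naff--Zhu. More importantly, the paper does not exclude an $A$-even $\sigma_1$-eigenfunction by any spectral-gap estimate on the M\"obius quotient. It argues topologically: one constructs an $A$-invariant circle $C\subset\mathrm{int}\,\Sigma$ whose two complementary regions are swapped by $A$ (this is where orientation reversal is used, and the paper obtains it by radial projection to $\partial\mathbb{B}^3_\varepsilon(r)$ after showing $\partial_0\notin\Sigma$, not from your boundary-circle argument, which by itself does not determine orientation); an even $\sigma_1$-eigenfunction would have $A$ preserve each of its two nodal domains, forcing the second nodal domain to lie inside one complementary region yet be $A$-invariant, a contradiction. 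The inclusion $\widetilde{\mathcal{C}}\subset\mathcal{E}_1$ is then obtained not by a center-of-mass or balancing argument but by a case analysis of the possible nodal graphs on the annulus, producing for each $u\in\mathcal{E}_1$ some $\varphi_v$ with $\langle u,\varphi_v\rangle_{L^2(\partial\Sigma)}\neq 0$. So even for the Main Theorem, your proposed route is different from the paper's and leaves its self-identified crux unresolved.
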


$ $\\
\textbf{Acknowledgements:} I would like to thank professor Vanderson Lima for his dedication and assistance in this work. I would also like to thank the Federal University of Rio Grande do Sul and the Federal Institute of Cear\'a.

\section{The Steklov problem with frequency.}\label{Steklovproblem}

Let $(\Sigma,g)$ be a compact Riemannian manifold with boundary, denote by $\nu$ the outward unit vector field along $\partial\Sigma$ and consider $\Delta = \mathrm{div}\circ\nabla$ the Laplace-Beltrami operator. Fix a value $\alpha$ that is not in the spectrum of the operator $-\Delta$ with Dirichlet boundary condition. The study of the Steklov problem with frequency $\alpha$ uses the \textit{Dirichlet-to-Neumann operator with frequency $\alpha$}, $\mathcal{D}_\alpha$, which we will now define. Consider $u \in H^1(\Sigma)$. We say that $\Delta u \in L^2(\Sigma)$ if there is a function  $f \in L^2(\Sigma)$ such that
$$\int_\Sigma g(\nabla u,\nabla v) = -\int_\Sigma fv,$$
for any $v \in H^1_0(\Sigma)$. In this case, we denote $\Delta u = f$. When we consider $u \in H^1(\Sigma)$ we lose the sense of $u\vert_{\partial\Sigma}$ and the normal derivative of $u$ in $\partial\Sigma$. For the first case we can use the trace operator $\mathrm{T} : H^1(\Sigma)\rightarrow L^2(\partial \Sigma)$. For the second case, we can work with the normal derivative for functions $u \in H^1(\Sigma)$ with $\Delta u \in L^2(\Sigma)$. In this situation, we say that $\frac{\partial u}{\partial \nu} \in L^2(\partial\Sigma)$ if there is $\phi \in L^2(\partial\Sigma)$ such that 
$$\int_\Sigma [g(\nabla u,\nabla v) + v\Delta u] = \int_{\partial\Sigma} v\phi,$$
for all $v \in H^1(\Sigma)$. We denote $\phi = \frac{\partial u}{\partial \nu}$. Now, a $\hat{u} \in L^2(\partial\Sigma)$ is in $\mathrm{Dom}(\mathcal{D}_\alpha)$, the domain of the Dirichlet-to-Neumann operator, if there is a function $u \in H^1(\Sigma)$ that satisfies: $\Delta u = - \alpha u \in L^2(\Sigma)$,  $\mathrm{T}(u) = \hat{u}$ and $\frac{\partial u}{\partial \nu} \in L^2(\partial\Sigma)$. Finally, we define $\mathcal{D}_\alpha : \mathrm{dom}(\mathcal{D}_\alpha) \rightarrow L^2(\partial\Sigma)$ as
$${\mathcal{D}}_\alpha u := \frac{\partial u}{\partial \nu}.$$

To extract properties from ${\mathcal{D}}_\alpha$ we can consider $V := T(H^1(\Sigma)) \subset L^2(\partial\Sigma)$ and define $B : V\times V\rightarrow \mathbb{R}$ as
$$B(u,v) = \int_\Sigma [g(\nabla u,\nabla v)-\alpha uv].$$

In \cite{Ar-Ma2} it has been proven that $V = \mathrm{Dom}(\mathcal{D}_\alpha)$, that this space is dense in $L^2(\partial\Sigma)$ and that
$$\langle \mathcal{D}_\alpha u,v\rangle_{L^2(\partial\Sigma)} = B(u,v) \ \ \ \ \ \textrm{and} \ \ \ \ \ B(u,u) + a||u||^2_{L^2(\partial\Sigma)} \geq b ||u||^2_{H^1(\Sigma)}$$
for certain constants $a \geq 0$ e $b > 0$. These statements guarantee that $\mathcal{D}_\alpha$ is a self-adjoint operator which is bounded below and has compact resolvent. 

\subsection{Eigenspaces and eigenfunctions }\label{autoespacoseautofuncoes} 

It follows that the eigenvalues of $\mathcal{D}_\alpha$ form a non-decreasing sequence $(\sigma_i)_{i=0}^\infty\subset\mathbb{R}$ whose limit tends to infinity. The eingenvalue $\sigma_i$ is usually called the Steklov eingenvalue. A function that satisfies the Steklov problem (with frequency) is called a Steklov eigenfunction, or a Steklov $\sigma_i$-eigenfunction to highlight that $u$ satisfies $\frac{\partial u}{\partial \nu} = \sigma_i u$. We denote by $\mathcal{E}_i$ the eigenspace associated with the Steklov eigenvalue $\sigma_i$. For brevity, we often omit the term `Steklov'. There are several properties related to the Steklov problem that we want to use. To do so, we will assume the condition that
\begin{equation}\label{hipminimumvalue}
\alpha < \lambda_1^D
\end{equation}
where $\lambda_1^D$ is the first non-zero eigenvalue of the operator $-\Delta$ with Dirichlet boundary condition. We will call $\alpha$ a \textit{supercritical frequency} if $\alpha < \lambda_1^D$ and, as we will see, this condition is true for a free boundary minimal surface in a geodesic ball in both the round hemisphere and the hyperbolic space. We assume the terminology of supercritical frequency due to the change in complexity of the Steklov problem, where we also consider $\alpha = \lambda_1^D$ a critical frequency and $\alpha > \lambda_1^D$ a subcritical frequency.

\begin{lem}\label{properties} Under the hypothesis that $\alpha$ is a supercritical frequency the following properties hold:
\begin{enumerate}[label=\arabic*.,itemsep=0.25cm]
\item $\sigma_0$ is characterized by $\sigma_0 = \inf\{B(u,u) \ ; \ ||u||_{L^2(\partial\Sigma)} = 1\}$.
\item If $u \in \mathcal{E}_0$ then $u > 0$ or $u < 0$ on $\Sigma$. 
\item If $u \in \mathcal{E}_i$ and $u(p) = 0$ then for any neighborhood $V$ of $p$ there are points $q_1,q_2 \in V$ such that $u(q_1) > 0$ and $u(q_2) < 0$.
\item If $u$ is a $\sigma_i$-eigenfunction and $v$ is a $\sigma_j$-eigenfunction then either $\sigma_i = \sigma_j$ or $\langle u,v\rangle_{L^2(\partial\Sigma)} = 0$.
\end{enumerate}
\end{lem}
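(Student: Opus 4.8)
The plan is to treat the four assertions in two groups: items 1 and 4 are formal consequences of the spectral theory already set up for $\mathcal{D}_\alpha$, while items 2 and 3 are the substantive ones and rest on elliptic maximum principles together with unique continuation. Throughout, the supercritical hypothesis $\alpha < \lambda_1^D$ is what makes the framework operative: it is exactly the condition under which the G\aa rding-type inequality $B(u,u) + a\|u\|_{L^2(\partial\Sigma)}^2 \ge b\|u\|_{H^1(\Sigma)}^2$ recalled above holds (so that $\mathcal{D}_\alpha$ is bounded below with compact resolvent and the Dirichlet problem $\Delta u = -\alpha u$ is well posed), and under which the generalized maximum principle holds for the operator $\Delta + \alpha$ on $\Sigma$, since the first Dirichlet eigenvalue of $-\Delta - \alpha$ equals $\lambda_1^D - \alpha > 0$.

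Item 4 I would deduce directly from self-adjointness: if $\mathcal{D}_\alpha u = \sigma_i u$ and $\mathcal{D}_\alpha v = \sigma_j v$, then
$$\sigma_i\langle u,v\rangle_{L^2(\partial\Sigma)} = \langle \mathcal{D}_\alpha u, v\rangle_{L^2(\partial\Sigma)} = \langle u, \mathcal{D}_\alpha v\rangle_{L^2(\partial\Sigma)} = \sigma_j\langle u,v\rangle_{L^2(\partial\Sigma)},$$
whence $(\sigma_i-\sigma_j)\langle u,v\rangle_{L^2(\partial\Sigma)} = 0$ and the dichotomy follows. For item 1, since $\mathcal{D}_\alpha$ is self-adjoint, bounded below and has compact resolvent, its spectrum is a discrete sequence and the bottom eigenvalue obeys the Rayleigh characterization $\sigma_0 = \inf\{\langle \mathcal{D}_\alpha u, u\rangle_{L^2(\partial\Sigma)} : \|u\|_{L^2(\partial\Sigma)} = 1\}$, the infimum being attained by a $\sigma_0$-eigenfunction; substituting the identity $\langle \mathcal{D}_\alpha u, u\rangle_{L^2(\partial\Sigma)} = B(u,u)$ yields the stated formula.

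For item 2, I would exploit that the quotient in item 1 sees only $|\nabla u|$ and the traces: given a minimizer $u\in\mathcal{E}_0$ one has $\nabla|u| = \operatorname{sgn}(u)\nabla u$ a.e.\ and $T(|u|) = |T(u)|$, so $B(|u|,|u|) = B(u,u)$ and $\||u|\|_{L^2(\partial\Sigma)} = \|u\|_{L^2(\partial\Sigma)}$; hence $|u|$ is again a minimizer and therefore a weak $\sigma_0$-eigenfunction solving $\Delta|u| = -\alpha|u|$. Since $|u|\ge 0$, the strong maximum principle (equivalently, the Harnack inequality for nonnegative solutions of $\Delta w + \alpha w = 0$) forces $|u|\equiv 0$ or $|u|>0$ in the interior; unique continuation excludes the former for a nontrivial eigenfunction, and connectedness of $\Sigma$ then gives that $u$ has a strict sign.

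Item 3 is the opposite phenomenon and I would prove it by contradiction: if $u\in\mathcal{E}_i$ vanishes at $p$ but keeps one sign, say $u\ge 0$, on a neighborhood $V$, then $u$ is a nonnegative solution of $\Delta u + \alpha u = 0$ on $V$ with $u(p)=0$. When $p$ is an interior point the strong maximum principle gives $u\equiv 0$ near $p$, and unique continuation propagates this to $u\equiv 0$ on all of $\Sigma$, contradicting $u\neq 0$. The delicate case, and the step I expect to be the main obstacle, is $p\in\partial\Sigma$: here I would invoke the Hopf boundary lemma, which at the interior minimum value $0$ forces $\partial u/\partial\nu(p) < 0$, in direct conflict with the Steklov condition $\partial u/\partial\nu(p) = \sigma_i u(p) = 0$. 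Making this rigorous requires verifying that Hopf's lemma applies to $\Delta + \alpha$ at a zero of $u$ (where the zeroth-order term contributes nothing) and controlling the regularity of the eigenfunction up to $\partial\Sigma$, which is the technical heart of the argument.
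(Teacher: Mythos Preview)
The paper states this lemma without proof: it is presented as a collection of standard facts following from the Arendt--Mazzeo framework for $\mathcal{D}_\alpha$ (self-adjointness, boundedness below, compact resolvent) together with classical elliptic theory, and the text immediately passes to its consequences. There is therefore no ``paper's own proof'' to compare against.

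Your proposed arguments are the expected ones and are correct. Items 1 and 4 are indeed immediate from the spectral setup. For item 3 your dichotomy between interior points (strong maximum principle plus unique continuation) and boundary points (Hopf lemma conflicting with the Steklov condition $\partial u/\partial\nu = \sigma_i u = 0$) is exactly right, and your remark that the zeroth-order term $\alpha u$ vanishes at the point in question is what makes both tools applicable regardless of the sign of $\alpha$. One small comment on item 2: your argument as written yields $|u|>0$ on $\mathrm{int}\,\Sigma$ only, whereas the claim is strict sign on all of $\Sigma$ including $\partial\Sigma$; to close this you should invoke the same Hopf boundary argument you use in item 3 (if $|u|(p)=0$ at $p\in\partial\Sigma$ with $|u|>0$ in the interior, Hopf gives $\partial|u|/\partial\nu(p)<0$, contradicting $\partial|u|/\partial\nu = \sigma_0|u| = 0$). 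With that addition the proof is complete.
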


The statements \textit{1} and \textit{2} imply $\mathcal{E}_0$ is simple. Two important consequences of \textit{4} are that the eigenspaces $\mathcal{E}_i$ are orthogonal and that if $u$ is a positive eigenfunction then $u \in \mathcal{E}_0$.

\section{Nodal set of a Steklov eigenfunction.}\label{nodalsteklov2}

Exploring the duality between the Robin problem and the Steklov problem, Hassannezhad and Sher \cite{Ha-Sh} were able to prove a nodal count theorem for ${\mathcal{D}}_\alpha$. To understand this result, consider $u$ an eigenfunction. The \textit{nodal set} of $u$, denoted by $\mathcal{N}_u$, is defined as the set of zeros of $u$, $\mathcal{N}_u = \{p \in \Sigma \ ; \ u(p) = 0\}$, and a \textit{nodal domain} of $u$ is a connected component of $\Sigma\setminus\mathcal{N}_u$. 


\begin{teo}[Hassannezhad-Sher]\label{hasssher}
Let $\Sigma$ be a compact Riemannian manifold. Fix $\alpha$ and consider $d$ the number of non-negative eigenvalues $\theta$  of the problem
\begin{equation}\label{countingmultiplicity}
\left\{\begin{array}{ll}
\Delta u +\alpha u = \theta u, & \textrm{on} \ \Sigma,\\[3pt]
u = 0, & \textrm{in} \ \partial\Sigma.
\end{array}\right.
\end{equation}

If $u$ is a $\sigma_i$-eigenfunction and $N_i$ represents the number of nodal domains of $u$ then $N_i \leq i + 1 + d$.
\end{teo}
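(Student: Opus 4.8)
The plan is to prove a Courant-type nodal domain bound for the Steklov problem with frequency by exploiting the duality between the Steklov operator $\mathcal{D}_\alpha$ and the Robin-type eigenvalue problem \eqref{countingmultiplicity}. The strategy mirrors the classical proof of Courant's nodal domain theorem, but one must account for the extra eigenvalues coming from the frequency term $\alpha u$, and this is precisely what the correction $d$ measures.

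First I would set up the variational framework. Let $u$ be a $\sigma_i$-eigenfunction with nodal domains $\Omega_1,\ldots,\Omega_{N_i}$, the connected components of $\Sigma\setminus\mathcal{N}_u$. On each $\Omega_k$ define the truncated function $u_k = u$ on $\Omega_k$ and $u_k = 0$ elsewhere; these lie in $H^1(\Sigma)$ and have disjoint supports, so they are linearly independent. The goal is to bound $N_i$ by producing, from the span of the $u_k$, a large subspace on which the Rayleigh-type quotient associated to $B$ stays below $\sigma_i$, and then to combine this with the min-max characterization of $\sigma_i$ (property 1 of Lemma~\ref{properties} gives the base case $\sigma_0$, and the standard Courant--Fischer extension gives $\sigma_i$ in terms of $B$ over $i{+}1$-dimensional subspaces of $V$).

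The central computation is to check that each $u_k$ is an ``admissible competitor'' with the correct Rayleigh value. Since $u$ solves $\Delta u = -\alpha u$ and $\frac{\partial u}{\partial\nu} = \sigma_i u$ on $\partial\Sigma$, integrating by parts on $\Omega_k$ shows $B(u_k,u_k) = \int_{\Omega_k}(|\nabla u|^2 - \alpha u^2) = \sigma_i \int_{\partial\Sigma\cap\partial\Omega_k} u^2 = \sigma_i \|u_k\|^2_{L^2(\partial\Sigma)}$, so each $u_k$ realizes the eigenvalue $\sigma_i$ exactly. The subtlety is that some nodal domains may be \emph{interior}, i.e. $\partial\Omega_k\cap\partial\Sigma=\emptyset$; on such a domain $u_k$ vanishes on $\partial\Sigma$ and is a Dirichlet eigenfunction of $-\Delta-\alpha$ with eigenvalue $0$, hence contributes to the count $d$ via \eqref{countingmultiplicity}. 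This is exactly where $d$ enters: interior nodal domains are governed by the Robin/Dirichlet spectrum rather than the Steklov spectrum, and there can be at most $d$ independent directions of this type carrying a nonpositive Dirichlet eigenvalue. Separating the $N_i$ domains into boundary-touching ones and interior ones, one argues via min-max that the boundary-touching truncations force $N_i - (\text{interior count}) \leq i+1$, while the interior ones are absorbed into $d$.

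The hard part will be making the min-max argument rigorous in the presence of the frequency term, because $B$ is not positive definite and the Steklov eigenvalues interact with the Dirichlet spectrum; the supercritical hypothesis $\alpha<\lambda_1^D$ from \eqref{hipminimumvalue} is what keeps the associated quadratic form coercive (via the inequality $B(u,u)+a\|u\|^2_{L^2(\partial\Sigma)}\geq b\|u\|^2_{H^1(\Sigma)}$ recalled above) so that the spectral theory and min-max characterizations remain valid. I would handle the bookkeeping by following Hassannezhad--Sher's duality dictionary: translate the Steklov nodal count into a Robin nodal count, where the shift $d$ appears naturally as the number of Dirichlet eigenvalues of $-\Delta-\alpha$ lying at or below zero, and then invoke the known Courant bound on that side. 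The delicate step is ensuring the truncated competitors remain in the correct form domain $V=T(H^1(\Sigma))$ and that the trace-orthogonality used to build the test subspace is not destroyed by the interior domains.
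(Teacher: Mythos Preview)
The paper does not contain a proof of this theorem: it is stated as a result of Hassannezhad and Sher \cite{Ha-Sh} and used as a black box. There is therefore nothing in the paper to compare your proposal against; the only content the paper adds after the statement is the observation that the eigenvalues $\theta$ of \eqref{countingmultiplicity} are exactly the numbers $\lambda-\alpha$ with $\lambda$ a Dirichlet eigenvalue of $-\Delta$, so that $d=0$ whenever $\alpha<\lambda_1^D$.

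As for the sketch itself, the overall shape (truncate $u$ to its nodal domains, feed these into a min--max for $\sigma_i$, and use the defect count $d$ to absorb the ``bad'' directions) is indeed how the Hassannezhad--Sher argument runs, but two points in your write-up are off. First, the supercritical hypothesis $\alpha<\lambda_1^D$ is \emph{not} an assumption of Theorem~\ref{hasssher}; the theorem is stated for general $\alpha$ outside the Dirichlet spectrum, and $d$ is precisely what compensates when $\alpha$ is large. The coercivity inequality you invoke is a separate device the paper uses to set up the spectral theory of $\mathcal{D}_\alpha$, not an ingredient of this particular bound. Second, your treatment of interior nodal domains is slightly mis-aimed: an interior domain $\Omega_k$ gives a Dirichlet eigenfunction on $\Omega_k$, not on $\Sigma$, so the link to the count $d$ (which is defined via the Dirichlet problem on all of $\Sigma$) passes through domain monotonicity rather than being immediate. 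In the actual proof the defect $d$ enters through the number of negative squares of the form $B$ on $H^1_0(\Sigma)$, and the min--max is run against the full Steklov spectrum with that correction, not by literally separating interior from boundary-touching domains.
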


An important point is that we can characteriza the eigenvalues $\theta$ in terms of the eigenvalues of the Laplacian. The number of non-negative eigenvalues $\theta$ coincides with the number of Laplacian eigenvalues $\lambda$ that satisfy $\lambda-\alpha \leq 0$. In particular, if $\alpha < \lambda_1^D$ then every eigenvalue of \eqref{countingmultiplicity} is positive, which implies $d = 0$ in Theorem \ref{hasssher}. This observation will be used to prove the Proposition \ref{minimumvalue} in Section \ref{fbmmsf}.

We then have the following facts:

\begin{lem}\label{doisdominioseme1}
Every eigenfunction with $u \in \mathcal{E}_1$ has exactly two nodal domains.
\end{lem}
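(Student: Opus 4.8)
The strategy is to combine two bounds on the number of nodal domains $N_1$ of an eigenfunction $u \in \mathcal{E}_1$: an upper bound from the Hassannezhad–Sher nodal count (Theorem \ref{hasssher}) and a lower bound of $2$ coming from orthogonality to the first eigenspace. Since we work under the supercritical frequency hypothesis $\alpha < \lambda_1^D$, the observation following Theorem \ref{hasssher} gives $d = 0$. Applying the theorem to a $\sigma_1$-eigenfunction (so $i=1$) then yields $N_1 \leq i + 1 + d = 2$. This disposes of the upper bound immediately, provided we have already established that $\alpha$ is supercritical in our geometric setting, which the paper asserts holds for free boundary minimal surfaces in the relevant geodesic balls.

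\smallskip

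For the lower bound I would argue that $N_1 \geq 2$, i.e. $u$ must change sign and therefore has at least two nodal domains. The key input is property \emph{4} of Lemma \ref{properties}: since $u \in \mathcal{E}_1$ and $\sigma_1 > \sigma_0$ (recall $\mathcal{E}_0$ is simple), $u$ is $L^2(\partial\Sigma)$-orthogonal to every $w \in \mathcal{E}_0$. Choosing a positive generator $w > 0$ of $\mathcal{E}_0$ (which exists by property \emph{2}), the condition $\langle u, w\rangle_{L^2(\partial\Sigma)} = 0$ forces the trace $\mathrm{T}(u)$ to change sign on $\partial\Sigma$, hence $u$ itself cannot be of one sign on $\Sigma$. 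Thus $\Sigma \setminus \mathcal{N}_u$ has at least two components, giving $N_1 \geq 2$.

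\smallskip

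Combining $2 \leq N_1 \leq 2$ gives $N_1 = 2$ exactly, which is the claim. The main subtlety I expect is in the lower bound: one must be careful that vanishing of the boundary integral $\langle u, w\rangle_{L^2(\partial\Sigma)}$ genuinely forces $u$ to change sign \emph{on $\Sigma$} and not merely on $\partial\Sigma$. Property \emph{3} of Lemma \ref{properties} is the safeguard here — it guarantees that zeros of an eigenfunction are genuine sign-change points (there is no one-sided touching of the nodal set), so a sign change of the trace on $\partial\Sigma$ propagates to a genuine separation of $\Sigma$ into at least two nodal domains rather than producing a single domain with an interior tangential zero. The remaining steps (the nodal count and the orthogonality) are direct applications of the stated results and should be routine.
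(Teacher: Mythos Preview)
Your proposal is correct and follows essentially the same approach as the paper: the upper bound $N_1 \leq 2$ comes from Hassannezhad--Sher with $d=0$ (supercritical frequency), and the lower bound comes from the interaction with the positive generator of $\mathcal{E}_0$. The paper's phrasing of the lower bound is marginally more direct --- it assumes one nodal domain, uses item~\emph{3} of Lemma~\ref{properties} to conclude $u$ has a sign on all of $\Sigma$, and then invokes the consequence (recorded just after Lemma~\ref{properties}) that a signed eigenfunction must lie in $\mathcal{E}_0$, a contradiction --- but the ingredients are identical to yours.
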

\begin{proof}[\bf Proof]
Since $\alpha$ is a supercritical frequency, we have that the number of nodal domains of $u \in \mathcal{E}_1$ is less than or equal to 2. Assume that $u$ only has one nodal domain. Item \textit{3} in Lemma \ref{properties} shows that $u > 0$ or $u < 0$ on $\Sigma$. Then, Item \textit{2} shows that $u \in \mathcal{E}_0$, a contradiction.
\end{proof}
 
\begin{lem}\label{bordoedominiosnodais}
Let $u$ be an eigenfunction with exactly two nodal domains $\Omega$ and $\Omega'$. Then
	\begin{enumerate}[label=\arabic*.]
	\item $\partial\Sigma \cap \Omega\not=\emptyset$ and $\partial\Sigma\cap\Omega' \not=\emptyset$.
	\item $u$ has opposite signs in $\Omega$ and $\Omega'$.
	\end{enumerate}
\end{lem}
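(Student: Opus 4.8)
The plan is to treat the two assertions separately, in both cases drawing on the properties of Steklov eigenfunctions collected in Lemma \ref{properties} together with the supercritical hypothesis $\alpha < \lambda_1^D$. Throughout I will use that on each nodal domain the eigenfunction $u$ has a single nonzero sign, which is immediate because the nodal domains are the connected components of $\Sigma\setminus\mathcal{N}_u$, and that $\mathcal{N}_u\neq\emptyset$: if $\mathcal{N}_u$ were empty, then the connected manifold $\Sigma$ would be its own single nodal domain, contradicting the hypothesis that there are exactly two.

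For the second assertion I would argue locally at the nodal set. Pick any point $p\in\mathcal{N}_u$. Since $u\in\mathcal{E}_i$ for some $i$, item \textit{3} of Lemma \ref{properties} produces points $q_1,q_2$ arbitrarily close to $p$ with $u(q_1)>0$ and $u(q_2)<0$. Neither $q_1$ nor $q_2$ lies on $\mathcal{N}_u$, so each belongs to one of the two nodal domains; because $u$ is sign-definite on each of them, $q_1$ and $q_2$ must sit in different components. As there are only two, one carries the positive sign and the other the negative, which is exactly the claim.

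For the first assertion I would argue by contradiction, supposing that one domain, say $\Omega$, satisfies $\Omega\cap\partial\Sigma=\emptyset$. The crucial observation is that the frontier of $\Omega$ in $\Sigma$ lies inside the nodal set: a frontier point cannot be interior to the open set $\Omega'$, hence it belongs to $\mathcal{N}_u$, where $u=0$. Thus $u|_\Omega$ solves $\Delta u + \alpha u = 0$ on $\Omega$ and vanishes on $\partial\Omega$. Integrating the equation against $u$ over $\Omega$, the boundary term drops out and yields $\int_\Omega |\nabla u|^2 = \alpha\int_\Omega u^2$. Extending $u$ by zero over the remainder of $\Sigma$ gives a nonzero $\tilde u\in H^1_0(\Sigma)$ whose Rayleigh quotient equals $\alpha$, so the variational characterization of the first Dirichlet eigenvalue forces $\lambda_1^D\leq\alpha$, contradicting $\alpha<\lambda_1^D$. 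Hence $\Omega$ must meet $\partial\Sigma$, and the identical argument handles $\Omega'$.

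The step I expect to demand the most care is the first assertion, namely justifying rigorously that $\tilde u$ genuinely lies in $H^1_0(\Sigma)$ and that the integration by parts over $\Omega$ is legitimate, since this involves the regularity of the nodal domain near its frontier rather than a routine estimate. The cleanest route is to note that the assumption $\Omega\cap\partial\Sigma=\emptyset$ already makes $\tilde u$ vanish on $\partial\Sigma$, so its trace is zero, while the inclusion $\partial\Omega\subset\mathcal{N}_u$ makes the boundary term in Green's formula disappear; together these make the variational comparison work without needing any finer description of the structure of $\mathcal{N}_u$.
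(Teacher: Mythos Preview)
Your proof is correct, but it follows a different path from the paper's.

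For part~2 the paper does not invoke item~\textit{3} of Lemma~\ref{properties} at all; instead it picks a positive $\phi\in\mathcal{E}_0$, uses $u\notin\mathcal{E}_0$ to get $\int_{\partial\Sigma}u\phi=0$, and then reads off the opposite signs from the cancellation $\int_{\partial\Sigma\cap\Omega}u\phi+\int_{\partial\Sigma\cap\Omega'}u\phi=0$. For part~1 the paper first notes that $u|_{\partial\Sigma}\not\equiv 0$ (since $\alpha<\lambda_1^D$ rules out a nontrivial Dirichlet solution), so one domain meets $\partial\Sigma$, and the same orthogonality identity then forces the other domain to meet $\partial\Sigma$ as well. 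Thus both conclusions flow from a single boundary integral identity. Your argument is more modular: part~2 comes straight from the local sign change at a nodal point, and part~1 is an independent Rayleigh--quotient contradiction that never touches $\mathcal{E}_0$ or boundary orthogonality. The paper's route is slicker in that one computation delivers both assertions; yours is slightly more robust in that it uses only the supercriticality $\alpha<\lambda_1^D$ and item~\textit{3}, without needing the simplicity or positivity of $\mathcal{E}_0$.

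One small refinement to your part~1: rather than integrating by parts over the possibly irregular domain $\Omega$, it is cleaner to use part~2 first to identify $\tilde u$ with $u^+$ (or $-u^-$), which lies in $H^1(\Sigma)$ automatically, and then to test the equation $\Delta u+\alpha u=0$ against $\tilde u\in H_0^1(\Sigma)$ on all of $\Sigma$; the identities $\nabla u\cdot\nabla\tilde u=|\nabla\tilde u|^2$ and $u\tilde u=\tilde u^2$ a.e.\ then give $\int_\Sigma|\nabla\tilde u|^2=\alpha\int_\Sigma\tilde u^2$ with no regularity of $\partial\Omega$ required.
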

\begin{proof}[\bf Proof]
First we will show that if $u$ is an eigenfunction with $u\vert_{\partial\Sigma}\equiv 0$ then $u\equiv 0$ on $\Sigma$. To see that we can use the fact the only solution of the problem
$$\left\{\begin{array}{ll}
\Delta v + \alpha v = 0, & \textrm{em} \ \Sigma,\\[3pt]
v = 0, & \textrm{em} \ \partial\Sigma,
\end{array}\right.$$
is the null function, since $\alpha$ is a supercritical frequency. Assume that $u$ is an eigenfunction with exactly two nodal domains. As we saw, $u$ cannot be zero on the boundary, from which we can assume that $\partial\Sigma\cap\Omega\not=\emptyset$. Furthermore, it follows from the Item \textit{2} in Lemma \ref{properties} that $u \not\in \mathcal{E}_0$. Consider $\phi \in \mathcal{E}_0$ a positive function in $\Sigma$. Since $u$ is an eigenfunction and $u\not\in \mathcal{E}_0$ we get that
$$0 = \langle u,\phi\rangle_{L^2(\partial\Sigma)} = \int_{\partial\Sigma} u\phi = \int_{\partial\Sigma\cap \Omega}u\phi + \int_{\partial\Sigma\cap\Omega'}u\phi.$$

Now, we have that $\partial\Sigma\cap\Omega$ is non-empty, $u$ does not change sign in this set and $\phi$ is positive, which guarantees that the first integral on the right side has a sign, which leads to what is desired.
\end{proof}

\subsection{Multiplicity bounds in a surface} 

In \cite{Ch}, Cheng studied the beha\-vior of eigenfunctions on a Riemannian manifold without boundary with the aid of spherical harmonic functions. In his work, was evident the complexity of dealing with the nodal set in any dimension, but in the case of surfaces he completely characterized the behavior of the nodal set. The techniques he presented were used in \cite{Ja,Ka-Ko-Po} to establish a multiplicity bounds of the Steklov eigenvalues in a compact surface with boundary. In \cite[Theorem 2.3]{Fr-Sc}, Fraser and Schoen presented a version of this result, whose strategy was used in \cite{Li-Me}, using Theorem \ref{hasssher}, to establish a version for the Steklov problem with frequency:

\begin{teo}[Lima-Menezes]\label{multiplicidadesigmai}
Let $\Sigma$ be a compact oriented surface with $\partial\Sigma\not=\emptyset$ and denote by $\gamma$ the genus of $\Sigma$. Assume that the Dirichlet eigenvalue problem \eqref{countingmultiplicity} does not admit non-negative eigenvalue. Then the multiplicity of a Steklov eingevalue (with frequency) $\sigma_i$ is at most $4\gamma + 2i + 1$.
\end{teo}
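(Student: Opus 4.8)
The plan is to adapt Cheng's nodal-set method, in the form used by Fraser and Schoen for the classical problem \eqref{steklovclassical}, replacing the Courant bound by the nodal count of Theorem \ref{hasssher}. Write $m$ for the multiplicity of $\sigma_i$, so $m=\dim\mathcal{E}_i$, and let $\gamma$ and $b$ be the genus and the number of boundary components of $\Sigma$, so that $\chi(\Sigma)=2-2\gamma-b$. The argument has three ingredients: a linear-algebra step producing a single eigenfunction that vanishes to large order at a prescribed point; Cheng's description of the nodal set near such a zero; and an Euler-characteristic count that converts the resulting high-valence vertex into a bound on $m$.

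First I would fix a boundary point $p\in\partial\Sigma$ and look for $0\neq u\in\mathcal{E}_i$ vanishing to high order at $p$. Since every $u\in\mathcal{E}_i$ solves $\Delta u+\alpha u=0$, the Bers--Aronszajn expansion shows that near $p$ the function $u$ agrees to leading order with a homogeneous harmonic polynomial, and along $\partial\Sigma$ this leading term must satisfy the \emph{homogeneous} Neumann condition, because the Steklov term $\sigma_i u$ is of lower order in the scaling. As the Neumann-harmonic polynomials form a one-dimensional space in each degree, the requirement that $u$ vanish to order $\ell$ at $p$ amounts to at most $\ell$ linear conditions on $\mathcal{E}_i$; hence there is a nonzero $u\in\mathcal{E}_i$ whose vanishing order $k$ at $p$ satisfies $k\ge m-1$.

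Next, the same unique-continuation input (valid for the elliptic operator $\Delta+\alpha$) gives Cheng's local model: near a zero of order $k$ the nodal set $\mathcal{N}_u$ is homeomorphic to the zero set of a degree-$k$ homogeneous harmonic polynomial, so exactly $k$ nodal arcs issue from $p$ into the interior of $\Sigma$. Regarding $\mathcal{N}_u\cup\partial\Sigma$ as a CW decomposition of $\Sigma$ whose $2$-cells are the nodal domains, whose interior vertices have even valence $2k_v$ and whose boundary vertices carry $m_v$ interior arcs, a half-edge count together with $V-E+F=\chi(\Sigma)$ yields an identity of the form
$$\sum_{v\in V_{\mathrm{int}}}(k_v-1)+\tfrac12\sum_{v\in V_{\partial}}m_v=N+2\gamma+b-2,$$
where $N$ is the number of nodal domains. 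By hypothesis the Dirichlet problem \eqref{countingmultiplicity} has no non-negative eigenvalue, so $d=0$ in Theorem \ref{hasssher} and $N\le i+1$. Since each of the $k$ arcs from $p$ terminates again on $\partial\Sigma$ or at an interior vertex, these arcs force the left-hand side to grow with $k$; substituting $N\le i+1$ and the lower bound $k\ge m-1$, and tracking the boundary contributions as in \cite{Fr-Sc,Li-Me}, then produces the asserted estimate $m\le 4\gamma+2i+1$.

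The step I expect to be the main obstacle is the last one: organising the Euler-characteristic bookkeeping so that the final constant depends only on $\gamma$ and $i$. A naive count bounds $m$ in terms of $\gamma$, $i$ and also $b$, and reducing this to the clean form requires controlling how the nodal arcs meet $\partial\Sigma$ and exploiting the sign behaviour of $u|_{\partial\Sigma}$ (in the spirit of Lemma \ref{bordoedominiosnodais} and item \textit{4} of Lemma \ref{properties}), rather than estimating each boundary circle separately. A secondary, more routine point is to justify the local analysis of the first two steps for the frequency-perturbed operator $\Delta+\alpha$, namely that unique continuation and the homogeneous leading-term expansion persist in the presence of the lower-order term $\alpha u$.
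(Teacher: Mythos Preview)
Your plan is exactly what the paper describes: the theorem is quoted from \cite{Li-Me}, and the only indication of proof is that it follows \cite[Theorem~2.3]{Fr-Sc} verbatim with the classical Courant bound replaced by Theorem~\ref{hasssher} (the hypothesis forces $d=0$, so $N\le i+1$). There is nothing further in the paper to compare against.

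On the obstacle you flag, the elimination of $b$ is already built into Fraser--Schoen's bookkeeping and needs no sign analysis on $\partial\Sigma$. Work with $G=\overline{\mathcal N_u}\cup\partial\Sigma$ and the exact relation $\chi(\Sigma)=\chi(G)+\sum_j\chi(\Omega_j)$ over the nodal domains $\Omega_j$. The high-order vertex $p$ contributes $-n/2$ to $\chi(G)=\sum_v\bigl(1-\tfrac{d_v}{2}\bigr)$. For each of the remaining $b-1$ boundary circles $C_l$: if $\mathcal N_u$ meets $C_l$ then by Proposition~\ref{multiplicidadesigma1prop2} an even number $\ge 2$ of arcs land there, so its vertices contribute at most $-1$ to $\chi(G)$; if not, the nodal domain abutting $C_l$ is not a disk, so its Euler characteristic is $\le 0$ and $\sum_j\chi(\Omega_j)$ drops by at least $1$ below the naive bound $F$. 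Either way each extra circle saves one unit, giving
\[
2-2\gamma-b \;=\;\chi(\Sigma)\;\le\; -\tfrac{n}{2}+(i+1)-(b-1),
\]
hence $n\le 4\gamma+2i$ and $m\le 4\gamma+2i+1$, with $b$ cancelled. Your secondary concern about the lower-order term $\alpha u$ is indeed routine and is precisely what underlies Propositions~\ref{multiplicidadesigma1prop1}--\ref{multiplicidadesigma1prop4}.
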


Our interest lies not only in the Theorem \ref{multiplicidadesigmai}, but also in some statements used in the strategy presented in \cite{Fr-Sc} regarding the nodal set. In the next propositions we will complement these statements to better understand the nodal set in an annulus. For what follows, we will assume the same hypotheses as the Theorem \ref{multiplicidadesigmai}. 

\begin{prop}\label{multiplicidadesigma1prop1}
Let $u$ be an Steklov eigenfunction. The set $S_u = \{p \in \Sigma \ ; \ u(p) = 0 \ \textrm{e} \ \nabla u(p) = 0\}$ is finite. Furthermore, if $p \in S_u\cap\partial\Sigma$ then $\mathcal{N}_u$, in a neighborhood of $p$, consists of a number $k$ , $2 \leq k < \infty$, of arcs that meet the boundary transversely.
\end{prop}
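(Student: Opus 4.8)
The plan is to analyze the local structure of the nodal set $\mathcal{N}_u$ of a Steklov eigenfunction $u$ by appealing to the classical work of Cheng, which describes the nodal set of a solution to a second-order elliptic equation on a surface. Since $u$ satisfies $\Delta u + \alpha u = 0$ on $\Sigma$, the interior behavior is governed by Cheng's structure theorem: near any interior critical zero $p$ (a point where $u(p)=0$ and $\nabla u(p)=0$), the nodal set $\mathcal{N}_u$ looks like an equiangular system of $m$ arcs crossing at $p$, where $m\geq 2$ is the order of vanishing of $u$ at $p$. The boundary behavior requires an additional argument because $u$ satisfies the Robin-type boundary condition $\partial u/\partial\nu = \sigma_i u$ rather than a Dirichlet or Neumann condition directly.

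\textbf{Finiteness of $S_u$.} First I would show $S_u$ is finite. The key observation is that $u\not\equiv 0$, since the only solution of $\Delta v+\alpha v=0$ with $v|_{\partial\Sigma}\equiv 0$ is trivial (as $\alpha$ is supercritical), combined with the fact that a nontrivial solution cannot vanish to infinite order at any point. By Cheng's local analysis, each zero of $u$ has a well-defined finite vanishing order, and the critical zeros in $S_u$ are exactly the points where this order is at least $2$. Interior critical zeros are isolated by the equiangular arc structure (a point of order $m$ has $\mathcal{N}_u$ consisting of $m$ arcs emanating from it, with no other zeros of $\nabla u$ nearby), and a standard compactness argument on the closed surface $\Sigma$ then forces $S_u$ to be finite: an infinite set would accumulate somewhere, contradicting that zeros of finite order are isolated as elements of $S_u$.

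\textbf{Boundary structure.} For a boundary point $p\in S_u\cap\partial\Sigma$, the approach is to extend $u$ across the boundary to reduce to Cheng's interior result. Because $u$ satisfies the oblique/Robin boundary condition $\partial u/\partial\nu = \sigma_i u$ with $u$ meeting the boundary, one reflects or doubles the surface: take the double $\hat\Sigma = \Sigma\cup_{\partial\Sigma}\Sigma$ and extend $u$ to a solution $\hat u$ of a suitable elliptic equation on $\hat\Sigma$ (one can absorb the boundary condition into the extension, using the Robin condition to match normal derivatives appropriately, or argue via the local representation of $u$ near the boundary in boundary normal coordinates). Then Cheng's theorem applied to $\hat u$ on the doubled surface shows that the zero set of $\hat u$ near $p$ is a union of $2k$ equiangular arcs; by the reflection symmetry of the construction, exactly $k$ of these lie in $\Sigma$, and they meet $\partial\Sigma$ transversely. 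The bound $k\geq 2$ holds precisely because $\nabla u(p)=0$: if only one arc met the boundary, $u$ would have a simple zero there, contradicting $p\in S_u$.

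\textbf{Main obstacle.} The hard part will be the boundary extension and the verification that the doubled function $\hat u$ satisfies an elliptic equation to which Cheng's structure theorem applies, since the Robin condition $\partial u/\partial\nu=\sigma_i u$ does not directly extend to a clean equation across $\partial\Sigma$ in the way Dirichlet or Neumann conditions do. One must either work in boundary normal coordinates and expand $u$ locally, showing the leading homogeneous part satisfies the boundary condition in the limit (so the Robin term becomes a lower-order perturbation that does not affect the leading-order nodal structure), or construct the extension carefully so that the resulting equation has sufficiently regular coefficients. Once this reduction is in place, the transversality of the arcs at the boundary and the count $2\leq k<\infty$ follow from the homogeneity of the leading term together with the already-established finiteness of $S_u$.
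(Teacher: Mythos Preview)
Your proposal is essentially correct and lands in the same place as the paper, but the route you emphasize for the boundary case is not the one actually taken. The paper's proof is terse: it defers the finiteness of $S_u$ and the general arc structure to Fraser--Schoen and Lima--Menezes, and only supplies the argument for $k\ge 2$. That argument is the one you sketch as your \emph{alternative} in the ``Main obstacle'' paragraph, not the doubling construction you lead with. Concretely, one works in a conformal chart that straightens $\partial\Sigma$ to the $x$-axis; there $u$ solves a perturbed Laplace equation, and the leading homogeneous term of its Taylor expansion at $p$ is a harmonic polynomial $P(x,y)$ which (by the Fraser--Schoen analysis of the boundary condition) is not identically zero along the boundary line. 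Since $\nabla u(p)=0$, one has $\deg P\ge 2$, and Cheng's description of the zero set of a harmonic polynomial gives $\deg P$ straight lines through the origin, hence at least two arcs of $\mathcal{N}_u$ meeting $\partial\Sigma$ transversely. The Robin term is lower order and never enters the leading-order picture, so no extension across $\partial\Sigma$ is needed; this sidesteps precisely the difficulty you flag with the doubling approach. Your reasoning for finiteness and for $k\ge 2$ matches the paper's.
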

\begin{proof}[\bf Proof]
Following \cite{Li-Me} and \cite{Fr-Sc} the only point we need to note is that $k \geq 2$. The hypothesis $\nabla u(p) = 0$ guarantees that there is a harmonic polynomial $P(x,y)$ in the Taylor expansion of $u$ at $p$ that is not zero in the boundary and has degree greater than one. From the proof of Theorem 2.5 in \cite{Ch} we have that the nodal set of $P$ is formed by straight lines that pass through the origin and that the number of straight lines is equal to the degree of the harmonic polynomial, so we have at least two lines.
\end{proof}

\begin{prop}\label{multiplicidadesigma1prop2}
If $C$ is a connect component of $\partial\Sigma$ then the set $C\cap \mathcal{N}_u$ is finite, from each point of $C\cap\mathcal{N}_u$ there is at least one arc of $\mathcal{N}_u$, which is transversal to $C$, and the total of these arcs is an even number.
\end{prop}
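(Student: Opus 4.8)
The plan is to analyze $\mathcal{N}_u$ in a collar of $C$ by reducing, near each boundary zero, to the leading homogeneous harmonic polynomial, and then to run a parity argument around the circle $C$. First I would record the crucial consequence of the boundary condition: if $p \in C$ with $u(p) = 0$, then $\frac{\partial u}{\partial\nu}(p) = \sigma_i u(p) = 0$, so the normal component of $\nabla u(p)$ vanishes and $\nabla u(p)$ is tangent to $C$. This forces a dichotomy. If $\nabla u(p) \ne 0$, then the tangential derivative of $u$ along $C$ is nonzero, so $p$ is a simple (hence isolated) zero of $u\vert_C$ and exactly one nodal arc issues from $p$, crossing $C$ transversally. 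If $\nabla u(p) = 0$, then $p \in S_u$, which is finite by Proposition \ref{multiplicidadesigma1prop1}, and that proposition already furnishes finitely many arcs meeting $C$ transversally. Since the simple zeros are isolated and $S_u\cap C$ is finite, compactness of $C$ yields that $C\cap\mathcal{N}_u$ is finite; note also that $u\vert_C\not\equiv 0$, for otherwise $u$ and $\frac{\partial u}{\partial\nu}=\sigma_i u$ would both vanish on $C$ and unique continuation would force $u\equiv 0$. This establishes the first assertion together with the existence of at least one transversal arc at each point of $C\cap\mathcal{N}_u$, which is the second assertion.

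For the parity count I would extract, at each $p \in C\cap\mathcal{N}_u$, the precise local model. Let $m=m_p$ be the vanishing order of $u$ at $p$, so the leading term of the Taylor expansion of $u$ at $p$ is a homogeneous harmonic polynomial $P_m$ of degree $m$. Because $\frac{\partial u}{\partial\nu}\vert_C=\sigma_i u\vert_C$ vanishes to order at least $m$ while $\frac{\partial P_m}{\partial\nu}\vert_C$ is homogeneous of degree $m-1$, the degree $(m-1)$ contribution must vanish, i.e. $P_m$ satisfies a homogeneous Neumann condition on $C$ at leading order. In boundary normal coordinates with $C=\{y=0\}$ and $\Sigma=\{y\ge 0\}$, this pins down $P_m=c\,r^{m}\cos(m\theta)$ up to the constant $c$, exactly as in the reflection argument underlying Proposition \ref{multiplicidadesigma1prop1} and Cheng's Theorem 2.5 in \cite{Ch}. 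The nodal rays of $P_m$ in the open half-disk occur at $\theta=\tfrac{(2k+1)\pi}{2m}$, $k=0,\dots,m-1$, giving exactly $m$ transversal arcs at $p$; moreover $P_m\vert_C$ equals $c$ on one side of $p$ and $(-1)^m c$ on the other. Hence the number of arcs issuing from $p$ equals $m_p$, and $u\vert_C$ changes sign across $p$ if and only if $m_p$ is odd.

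Finally I would close with the global parity argument. Enumerating $C\cap\mathcal{N}_u=\{p_1,\dots,p_N\}$ and letting $n_j=m_{p_j}$ be the number of arcs at $p_j$, I observe that travelling once around the circle $C$ and returning to the starting point, the sign of $u\vert_C$ must return to its initial value, so the total number of sign changes of $u\vert_C$ is even. By the previous step this number equals $\#\{j:n_j\text{ odd}\}$, whence $\sum_{j}n_j\equiv\#\{j:n_j\text{ odd}\}\equiv 0\pmod 2$, giving the third assertion. The main obstacle is the second paragraph: justifying that the leading harmonic polynomial is forced into the even form $r^m\cos(m\theta)$ and that this ties the parity of the arc count at $p$ to the sign flip of $u\vert_C$. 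This is precisely where the Steklov boundary condition enters, through $\frac{\partial u}{\partial\nu}(p)=0$, and it must be handled carefully so that the $C^1$ closeness of $\mathcal{N}_u$ to the nodal set of $P_m$ transfers both the arc count and the boundary-sign pattern.
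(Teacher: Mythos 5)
Your proof is correct and follows essentially the same route as the paper: finiteness by ruling out accumulation of boundary zeros via the local structure at points of $S_u$, existence of a transversal arc from the local model, and the parity count by tracking sign changes of $u\vert_C$ around the circle. Your second paragraph merely refines the paper's region-counting step by pinning the leading term down to $c\,r^m\cos(m\theta)$, which gives the same parity conclusion.
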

\begin{proof}[\bf Proof]
Assume that $C\cap\mathcal{N}_u$ is infinite. The compactness of $C$ guarantees that there is a sequence $(q_n)$ in $C\cap\mathcal{N}_u$ with $q_n\rightarrow p \in C$. This implies $u(p) = 0$ and $du_p(V) = 0$, where $V \in T_p\partial\Sigma$. Since $\frac{\partial u}{\partial \nu}(p) = 0$ we get that $p \in S_u$. But from the Proposition \ref{multiplicidadesigma1prop1} there is a neighborhood $I$ of $p$ in $C$ such that $I\cap\mathcal{N}_u = \{p\}$, a contradiction. For the second statement, we have from the Lemma \ref{properties} that $u$ does not have isolated zeros, so $p \in C\cap\mathcal{N}_u$ it cannot be an isolated point. This shows that $p$ is at the closure of the nodal set of $u$ in $\mathrm{int}\,\Sigma$ which, according to \cite{Ch}, is a set of lines whose intersections occur at points of $S_u$. Still following \cite{Ch} we can show that this set of lines is finite, and with the fact that $p$ is in the closure of $\mathcal{N}_u\cap \mathrm{int}\Sigma$ guarantees what is desired. For the last statement, as the number of nodal lines is finite, we have that the number of arcs of $\mathcal{N}_u$ that start from $C$ is finite. Take $p \in C\cap\mathcal{N}_u$ and consider a number $k$ of arcs starting from $p$. Close to $p$ these arcs determine $k+1$ regions where $u$ changes the sign. If $k$ is even, we have an odd number of regions, so $u$ maintains the signal in a neighborhood of $p$ in $C$. If $k$ is odd, then $u$ changes the sign, so the total number of points from which an odd number of arcs starts is even.
\end{proof}

\begin{prop}\label{multiplicidadesigma1prop4}
If $u$ has, at most, two nodal domains then the order of vanishing of $u$ at $p \in \mathrm{int}\Sigma$ is less than or igual to one.
\end{prop}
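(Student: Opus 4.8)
The plan is to argue by contradiction, assuming that $u$ vanishes at an interior point $p$ to order $k\geq 2$, and to show that this is incompatible with $u$ having at most two nodal domains. First I would record the local structure of the nodal set at $p$. As in Proposition~\ref{multiplicidadesigma1prop1}, the hypothesis that the order of vanishing is $k$ means that the leading term of the Taylor expansion of $u$ at $p$ is a nonzero homogeneous harmonic polynomial of degree $k$; by the proof of Theorem 2.5 in \cite{Ch} the nodal set of $u$ near $p$ consists of exactly $k$ smooth arcs through $p$, meeting pairwise transversally at $p$, which divide a small disk $D$ around $p$ into $2k$ sectors on which $u$ alternates in sign. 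In particular, since $k\geq 2$, there are at least two sectors on which $u>0$ and at least two on which $u<0$.

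Next I would pin down the global nodal domain structure. By item~3 of Lemma~\ref{properties} the function $u$ changes sign in every neighborhood of $p$, so $u$ attains both signs and hence has at least one positive and one negative nodal domain; combined with the standing hypothesis this forces exactly two nodal domains, say $\Omega$ and $\Omega'$, which by item~2 of Lemma~\ref{bordoedominiosnodais} carry opposite signs. Writing $\Omega=\{u>0\}$ and $\Omega'=\{u<0\}$, every positive sector at $p$ lies in $\Omega$ and every negative sector lies in $\Omega'$. Labelling the $2k$ sectors cyclically around $p$, I select two positive sectors $P_1,P_2$; on each of the two arcs of the cyclic order joining them there is at least one negative sector, which is possible precisely because $k\geq 2$. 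Call $N_1,N_2$ two such intervening negative sectors, one on each arc between $P_1$ and $P_2$.

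The heart of the argument is then a Jordan-curve separation. Since $\Omega$ is connected, I would join a point of $P_1$ to a point of $P_2$ by a simple arc contained in $\Omega$, and close it up by the two radial segments of $D$ running from $p$ into $P_1$ and into $P_2$; this produces a simple closed curve $\Gamma$ with $\Gamma\setminus\{p\}\subset\Omega=\{u>0\}$, whose two radial segments at $p$ separate $N_1$ from $N_2$ inside $D$. On the other hand, $\Omega'$ is connected, so there is an arc $\beta\subset\Omega'=\{u<0\}$ joining a point of $N_1$ to a point of $N_2$. Because $\beta$ and $\Gamma\setminus\{p\}$ lie in the disjoint nodal domains $\Omega'$ and $\Omega$, and $\beta$ avoids $p\in\mathcal{N}_u$, the arc $\beta$ is disjoint from $\Gamma$; but $\Gamma$ separates $N_1$ from $N_2$, so $\beta$ must cross $\Gamma$. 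This contradiction shows $k\leq 1$.

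The step I expect to require the most care is the separation assertion, since a simple closed curve need not disconnect a surface of positive genus. In the genus-zero situation relevant for the annulus the Jordan curve theorem applies directly and the crossing argument closes immediately. For general genus I would supplement the sector count with the Euler-characteristic bookkeeping for the nodal graph used in \cite{Ch,Fr-Sc}, which controls the contribution of an interior singular point of order $k$ and again rules out $k\geq 2$ when only two faces are present. The remaining points are routine: that the connecting arcs may be chosen simple and that they meet the closing radial segments only at their endpoints, so that $\Gamma$ is a genuine Jordan curve.
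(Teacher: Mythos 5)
Your argument is correct and is essentially the one the paper relies on: the paper's proof simply defers to Theorem 2.3 of \cite{Fr-Sc} (with Courant's nodal domain theorem replaced by the two–nodal-domain hypothesis), and the local structure of the nodal set at a higher-order zero together with the Jordan-curve separation argument you spell out is precisely the content of that cited argument in the genus-zero case, which is the only case the paper ever applies. Your caveat about positive genus is a sensible extra precaution rather than a gap, since the proposition is invoked only for the annulus.
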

\begin{proof}[\bf Proof]
The proof follows from Theorem 2.3 in \cite{Fr-Sc} with the only difference being the replacement of the nodal domain theorem \cite{Ku-Si} by the hypothesis that $u$ has at most two nodal domains.
\end{proof}

\subsection{The nodal set as a graph}

It follows from \cite{Ch} that if $\Sigma$ is a surface without boundary then $\mathcal{N}_u$ is a set of lines, each line being an immersed and closed submanifold with the property that when the lines meet they form an equiangular system. We can see a point where lines meet as a \textit{vertex} of a graph and the lines between vertices as \textit{edges}. But we can have two special types of edges: those that start from a vertex and are divergent (in the sense that the curve is not contained in any compact) on one side and those that are divergent from both sides. In our case, we have a compact surface with boundary, where we can apply this result to the interior of $\Sigma$. The continuity of $u$ shows that divergent lines tend to a zero in the boundary, which we will consider as a vertex of the graph. The Proposition \ref{multiplicidadesigma1prop2} guarantees that the number of vertices on the boundary is finite, that each vertex on the boundary has at least one edge and that the number of vertices in the interior of $\Sigma$ and the number of edges are finite. Thus, we can see $\mathcal{N}_u$ as a finite graph. 

Let $\mathcal{N}$ be a graph with the same properties as a nodal set. For a better description, we will use the following nomenclature: a \textit{maximal line} in $\mathcal{N}$ is the image of a topological embedding $f : [0,1]\rightarrow \mathcal{N}$ with the property that $f(0),f(1) \in \partial \Sigma$ and $f(t) \in \mathrm{int}\Sigma$, $t \in (0,1)$ (ex. the juxtaposition $A*D*C$ in the Figure \ref{figuraterminologia}); a \textit{circle} in $\mathcal{N}$ is the image of a topological embedding $f : \mathbb{S}^1\rightarrow \mathcal{N}$ (ex. the justaposition $A*E*F$ in the Figure \ref{figuraterminologia}); a \textit{line} in $\partial \Sigma$ is a topological embedding $f : I\rightarrow \partial\Sigma$, where $I = [0,1]$ or $I = \{0\}$; a \textit{closed chain} is the imagem of a circle in $\mathcal{N}\cap\mathrm{int}\Sigma$ or is the image of a topological embedding $f : \mathbb{S}^1\rightarrow \Sigma$ that is given by a (finite) juxtaposition of maximal lines in $\mathcal{N}$ and lines in $\partial \Sigma$, given alternately, so that if $C$ is a connected component of $\partial\Sigma$ then $\mathrm{Im}\,f\cap C$ is connected (ex. the justaposition $A*E*F$ or $B*X*C*D$ in the Figure \ref{figuraterminologia}). Observe that the structure of $\mathcal{N}$ guarantees that every edge of the graph is contained in a maximal line in $\mathcal{N}$ or in a circle in $\mathcal{N}$. In particular, we can walk in maximal lines and, if necessary, use lines in $\partial\Sigma$, in order to verify that every graph $\mathcal{N}$ admits a closed chain. 

\begin{figure}[H]
\centering \includegraphics[scale=0.150]{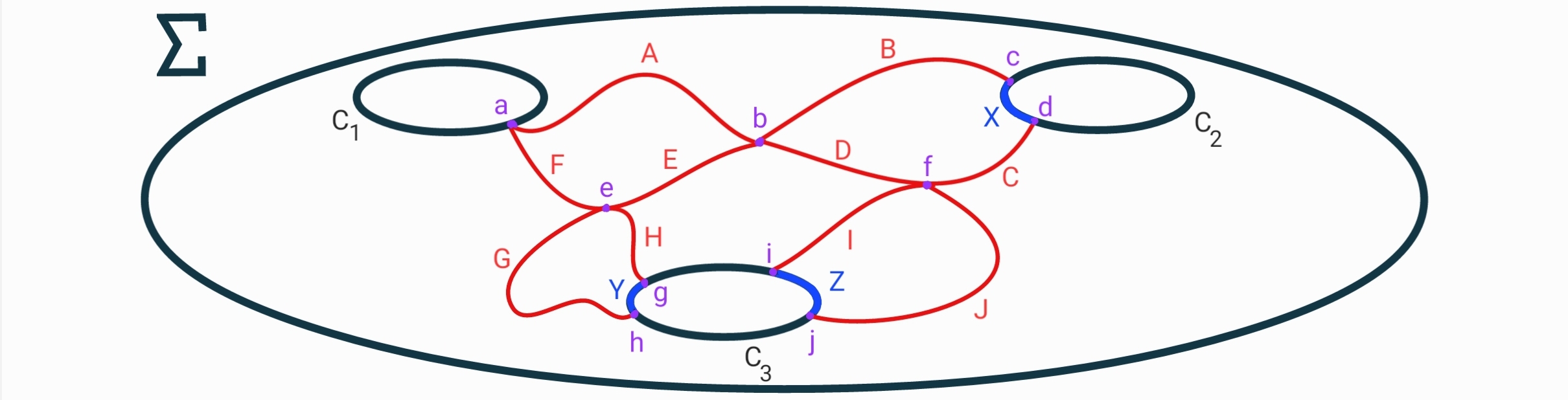}
\caption{$\Sigma$ is a plane region with the removal of three disks, $C_1$, $C_2$ and $C_3$. Lowercase and uppercase letters represent the vertices and edges of the graph, respectively. $X$, $Y$ and $Z$ are lines in the boundary.} 
\label{figuraterminologia}
\end{figure}

In order to obtain more information about $\mathcal{N}$ we can use the fact that $\Sigma$ is an orientable surface with genus zero. Take $f : \mathbb{S}^1\rightarrow \Sigma$ a closed chain and write $\Sigma\setminus\mathrm{Im}\,f = \Sigma_1\dot{\cup}\Sigma_2$. The last condition in the definition of a closed chain was adopted so that the $\mathrm{Im}\,f$ separated $\Sigma$ in two components. Topologically, $\overline{\Sigma}_1$ and $\overline{\Sigma}_2$ have the same properties as $\Sigma$, they are topological manifolds with boundary and genus zero. So, one direction we can take is to analyze whether $\mathcal{N}$ determines a graph in $\Sigma_i$, $i=1,2$. Since $\mathcal{N}$ has no isolated vertices, we can define $\mathcal{N}_i$ as the closure of $\mathcal{N}\cap\Sigma_i$, $i=1,2$. We will now prove some properties regarding the sets $\mathcal{N}_i$.

\begin{lem}\label{decompondon_i}
If $f : \mathbb{S}^1\rightarrow \Sigma$ is a closed chain then $\mathcal{N}_i = (\mathcal{N}\cap\Sigma_i)\,\dot{\cup}\,(\mathcal{N}_i\cap\mathrm{Im}\,f)$.
\end{lem}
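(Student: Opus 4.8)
The plan is to treat this purely as a point-set topology statement, the only structural input being that $\mathcal{N}$ is closed and that $\mathrm{Im}\,f$ is closed with $\Sigma\setminus\mathrm{Im}\,f=\Sigma_1\,\dot{\cup}\,\Sigma_2$ a decomposition into open pieces. First I would record the basic facts. Since $f$ is a topological embedding of the compact set $\mathbb{S}^1$, its image $\mathrm{Im}\,f$ is compact and hence closed in $\Sigma$, so each $\Sigma_i$ is open; moreover, being a finite graph (a finite union of compact edges and vertices), $\mathcal{N}$ is a closed subset of $\Sigma$. The starting point is then the disjoint decomposition $\Sigma=\Sigma_1\,\dot{\cup}\,\Sigma_2\,\dot{\cup}\,\mathrm{Im}\,f$, which covers all of $\Sigma$, including $\partial\Sigma$.

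The disjointness of the two sets on the right-hand side is immediate: since $\Sigma_i\subseteq\Sigma\setminus\mathrm{Im}\,f$ we have $\Sigma_i\cap\mathrm{Im}\,f=\emptyset$, and therefore $(\mathcal{N}\cap\Sigma_i)\cap(\mathcal{N}_i\cap\mathrm{Im}\,f)\subseteq\Sigma_i\cap\mathrm{Im}\,f=\emptyset$, which justifies writing the union as $\dot{\cup}$.

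For the equality I would split $\mathcal{N}_i$ according to whether a point lies in the open set $\Sigma_i$ or not. The key intermediate claim is that the frontier of $\Sigma_i$ is contained in $\mathrm{Im}\,f$: if $p\in\overline{\Sigma_i}\setminus\Sigma_i$, then $p$ cannot lie in the open set $\Sigma_j$ with $j\neq i$ without contradicting $p\in\overline{\Sigma_i}$, so by the decomposition above $p\in\mathrm{Im}\,f$; thus $\overline{\Sigma_i}\subseteq\Sigma_i\cup\mathrm{Im}\,f$. Since $\mathcal{N}\cap\Sigma_i\subseteq\Sigma_i$, taking closures gives $\mathcal{N}_i=\overline{\mathcal{N}\cap\Sigma_i}\subseteq\overline{\Sigma_i}\subseteq\Sigma_i\cup\mathrm{Im}\,f$. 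Now I intersect with $\Sigma_i$: because $\mathcal{N}$ is closed, $\mathcal{N}_i\subseteq\mathcal{N}$, hence $\mathcal{N}_i\cap\Sigma_i\subseteq\mathcal{N}\cap\Sigma_i$, while the reverse inclusion $\mathcal{N}\cap\Sigma_i\subseteq\mathcal{N}_i\cap\Sigma_i$ is clear from $\mathcal{N}\cap\Sigma_i\subseteq\mathcal{N}_i$; therefore $\mathcal{N}_i\cap\Sigma_i=\mathcal{N}\cap\Sigma_i$. The complementary part $\mathcal{N}_i\setminus\Sigma_i$ is contained in $\mathrm{Im}\,f$ by the inclusion just proved, so it equals $\mathcal{N}_i\cap\mathrm{Im}\,f$. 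Combining these, $\mathcal{N}_i=(\mathcal{N}_i\cap\Sigma_i)\,\dot{\cup}\,(\mathcal{N}_i\setminus\Sigma_i)=(\mathcal{N}\cap\Sigma_i)\,\dot{\cup}\,(\mathcal{N}_i\cap\mathrm{Im}\,f)$, as desired.

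Since every step is elementary, I do not expect a serious obstacle; the only points requiring genuine care are the frontier inclusion $\overline{\Sigma_i}\setminus\Sigma_i\subseteq\mathrm{Im}\,f$, which relies on $\Sigma_j$ being open, and the use of the closedness of $\mathcal{N}$ to identify $\mathcal{N}_i\cap\Sigma_i$ with $\mathcal{N}\cap\Sigma_i$ rather than with some strictly larger set.
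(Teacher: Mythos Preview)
Your proof is correct and follows essentially the same approach as the paper: the paper's proof simply says that the result ``follows from the decomposition $\Sigma = \Sigma_1\,\dot{\cup}\,\mathrm{Im}\,f\,\dot{\cup}\,\Sigma_2$ and the fact that $\mathcal{N}$ is a closed set,'' and your argument is precisely a careful unpacking of that one-line remark.
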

\begin{proof}[\bf Proof]
The proof follows from the decomposition $\Sigma = \Sigma_1\dot{\cup}\mathrm{Im}\,f\dot{\cup}\Sigma_2$ and the fact that $\mathcal{N}$ is a closed set.
\end{proof}

\begin{lem}\label{arestanobordo}
Let $f : \mathbb{S}^1\rightarrow \Sigma$ be a closed chain. If $p \in \mathcal{N}_i\cap \mathrm{Im}\,f$ then there is an edge $R$ of $\mathcal{N}$ such that $p \in R$ and $R\cap\Sigma_i\not=0$.
\end{lem}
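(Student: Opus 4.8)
The plan is to combine the finiteness of the nodal graph $\mathcal{N}$ with a pigeonhole argument applied to an approximating sequence. Since $p \in \mathcal{N}_i\cap\mathrm{Im}\,f$ and $\mathcal{N}_i$ is by definition the closure of $\mathcal{N}\cap\Sigma_i$, I would first choose a sequence $(p_n)$ in $\mathcal{N}\cap\Sigma_i$ with $p_n\to p$. Each $p_n$ lies in $\Sigma_i$, which is a connected component of $\Sigma\setminus\mathrm{Im}\,f$ and hence disjoint from $\mathrm{Im}\,f$; in particular $p_n\neq p$, so the points genuinely sit in the open region $\Sigma_i$ while their limit $p$ lies on the chain.

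Next I would invoke the graph structure of $\mathcal{N}$ established in the preceding discussion: $\mathcal{N}$ is a finite graph, so it is the union of finitely many edges $R_1,\dots,R_k$, each a compact arc or circle whose endpoints are vertices. Every point of $\mathcal{N}$, and in particular every $p_n$, lies on at least one of these edges. By the pigeonhole principle some edge $R:=R_j$ contains infinitely many of the $p_n$; passing to this subsequence I retain $p_n\to p$ with all $p_n\in R\cap\Sigma_i$.

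The two conclusions then follow directly. First, $R\cap\Sigma_i\neq\emptyset$, since each retained $p_n$ already belongs to it. Second, $R$ is closed, being a compact arc or circle, and $p_n\to p$, whence $p\in R$. This produces the edge $R$ with $p\in R$ and $R\cap\Sigma_i\neq\emptyset$, as required.

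I do not anticipate a substantial obstacle; the argument is in essence a finiteness-plus-closedness observation. The only point requiring care is the justification that each $p_n$ lies on an edge and that the edges are closed, which rests on the structural description of $\mathcal{N}$ coming from Cheng's work \cite{Ch}: away from the finitely many vertices the nodal set consists of arcs, the interior vertices are finite in number, the boundary vertices are finite by Proposition \ref{multiplicidadesigma1prop2}, and divergent lines terminate at boundary zeros treated as vertices. Once this finite-graph structure is in place, the pigeonhole selection and the closure step are routine.
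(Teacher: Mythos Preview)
Your proposal is correct and follows essentially the same route as the paper: pick a sequence in $\mathcal{N}\cap\Sigma_i$ converging to $p$ (the paper cites Lemma~\ref{decompondon_i}, you invoke the definition of $\mathcal{N}_i$ as a closure directly), use finiteness of the edge set to pigeonhole into a single edge $R$, and conclude $p\in R$ by closedness. The only cosmetic difference is that you spell out more explicitly why $R\cap\Sigma_i\neq\emptyset$ and why the edges are closed, which the paper leaves implicit.
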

\begin{proof}[\bf Proof]
From Lemma \ref{decompondon_i}, there is a sequence $(q_n)$ in $\mathcal{N}\cap\Sigma_i$ such that $q_n\rightarrow p$. Since $\mathcal{N}$ is a finite graph, there is an edge $R$ which contains infinite points of the sequence. Since $R$ is closed, we get that $p \in R$.
\end{proof}

\begin{lem}\label{interiordaaresta}
Let $f : \mathbb{S}^1\rightarrow \Sigma$ be a closed chain. If $R$ is an edge of $\mathcal{N}$ with $R\cap\Sigma_i\not=\emptyset$, then $\mathrm{int}\,R\subset \Sigma_i$. As a consequence, we have that $\mathcal{N}_i\cap\mathrm{Im}\,f$ is formed by vertices of $\mathcal{N}$, therefore is finite. 
\end{lem}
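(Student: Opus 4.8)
The plan is to prove the edge statement by a connectedness argument and then deduce the consequence using Lemma \ref{arestanobordo}. The central observation is that $\mathrm{int}\,R$ is a connected subset of $\mathrm{int}\,\Sigma$ that is disjoint from $\mathrm{Im}\,f$; granting this, since $\Sigma\setminus\mathrm{Im}\,f=\Sigma_1\dot{\cup}\Sigma_2$ is a disjoint union of open sets, the connected set $\mathrm{int}\,R$ must lie entirely in one of $\Sigma_1,\Sigma_2$, and because $R\cap\Sigma_i\neq\emptyset$ it must be $\Sigma_i$. So the whole content of the first assertion reduces to proving $\mathrm{int}\,R\cap\mathrm{Im}\,f=\emptyset$.

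The main step, and the part I expect to be delicate, is precisely showing $\mathrm{int}\,R\cap\mathrm{Im}\,f=\emptyset$. First I would record that $\mathrm{int}\,R\subset\mathrm{int}\,\Sigma$: by Proposition \ref{multiplicidadesigma1prop2} the nodal set meets each boundary component in a finite set of points, transversally, so an edge touches $\partial\Sigma$ only at its boundary vertices, never along its interior. Now suppose $q\in\mathrm{int}\,R\cap\mathrm{Im}\,f$. Here I would use that a closed chain is, by definition, a juxtaposition of maximal lines in $\mathcal{N}$ and lines in $\partial\Sigma$. Since $q\in\mathrm{int}\,\Sigma$, the point $q$ cannot lie on a boundary line of the chain, so it lies on one of its maximal lines, hence on some edge $R'$ of $\mathcal{N}$ making up that maximal line. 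If $R'=R$, then $R$ is one of the edges of the chain and therefore $R\subset\mathrm{Im}\,f$, contradicting $R\cap\Sigma_i\neq\emptyset$ (recall $\Sigma_i\cap\mathrm{Im}\,f=\emptyset$). If $R'\neq R$, then $q$ is an interior point of $R$ lying on a distinct edge $R'$; but distinct edges of the finite graph $\mathcal{N}$ meet only at vertices, so $q$ would be a vertex, contradicting $q\in\mathrm{int}\,R$. Either way we reach a contradiction, which establishes $\mathrm{int}\,R\cap\mathrm{Im}\,f=\emptyset$ and completes the first assertion.

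For the consequence, I would take any $p\in\mathcal{N}_i\cap\mathrm{Im}\,f$ and apply Lemma \ref{arestanobordo} to produce an edge $R$ of $\mathcal{N}$ with $p\in R$ and $R\cap\Sigma_i\neq\emptyset$. By the first assertion $\mathrm{int}\,R\subset\Sigma_i$, and since $\Sigma_i\cap\mathrm{Im}\,f=\emptyset$ we get $\mathrm{int}\,R\cap\mathrm{Im}\,f=\emptyset$; as $p\in R\cap\mathrm{Im}\,f$, the point $p$ must then be an endpoint of $R$, that is, a vertex of $\mathcal{N}$. Because $\mathcal{N}$ is a finite graph it has finitely many vertices, whence $\mathcal{N}_i\cap\mathrm{Im}\,f$ is finite.
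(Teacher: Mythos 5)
Your proof is correct and takes essentially the same route as the paper's: both reduce the first assertion to the observation that a point of $\mathrm{int}\,R$ lying on $\mathrm{Im}\,f$ would have to be a vertex of $\mathcal{N}$ (the paper walks along $R$ toward each endpoint and argues the first exit point from $\Sigma_i$ is a vertex, you invoke connectedness of $\mathrm{int}\,R$ together with $\mathrm{int}\,R\cap\mathrm{Im}\,f=\emptyset$), and both deduce the finiteness claim from Lemma \ref{arestanobordo} in the same way. The only cosmetic omission is the case where the closed chain is a circle in $\mathcal{N}\cap\mathrm{int}\,\Sigma$ rather than a juxtaposition of maximal lines and boundary lines, but there $\mathrm{Im}\,f$ is again a union of edges of $\mathcal{N}$ and your case analysis applies verbatim.
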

\begin{proof}[\bf Proof]
Consider $p_1$ and $p_2$ the vertices of $R$. Take a point $p\in \mathrm{int}R\cap \Sigma_i$ and walk towards $p_1$. If we reach a point $q$ that is not in $\Sigma_i$ then $q \in \mathrm{Im}f$, where we will have that $q$ is a vertex of $\mathcal{N}$, since $f$ is a closed chain, which guarantees that $(p,p_1)$, the open segment (in $R$) from $p$ to $p_1$, is contained in $\Sigma_i$. If we don't reach that point we will have the same conclusion, so $(p,p_1)\subset \Sigma_i$. Repeating the process for $p_2$, we get that $\mathrm{int}R\subset \Sigma_i$. Now let us prove the second claim. The Lemma \ref{arestanobordo} guarantees that if $p \in \mathcal{N}_i\cap\mathrm{Im}\,f$ there is an edge $R$ of $\mathcal{N }$ with $p \in R$ and $R\cap\Sigma_i\not=\emptyset$. The first part of the proof guarantees that $\mathrm{int}\,R \subset \Sigma_i$, from which we obtain that $p$ is a vertex of $R$.
\end{proof}

Now we can show that $\mathcal{N}_i$ is a graph with the same properties as $\mathcal{N}$.

\begin{prop}\label{grafon_i}
Let $f : \mathbb{S}^1\rightarrow \Sigma$ be a closed chain. Then $\mathcal{N}_i$ is a graph formed by all edges $R$ of $\mathcal{N}$ such that $\mathrm{int}R\subset\Sigma_i$. Furthermore, we have that $\mathcal{N}_i$ has the same properties as $\mathcal{N}$.
\end{prop}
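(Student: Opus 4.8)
The plan is to first establish the explicit description of $\mathcal{N}_i$ as a union of edges, and then transfer, edge by edge and vertex by vertex, the defining properties of a nodal-set graph from $\mathcal{N}$ to $\mathcal{N}_i$. The three preparatory lemmas do almost all of the work, so after the correct local picture is fixed the argument is mostly bookkeeping.

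First I would prove the set equality $\mathcal{N}_i = \bigcup\{R : R \text{ an edge of } \mathcal{N},\ \mathrm{int}\,R\subset\Sigma_i\}$. The inclusion $\supseteq$ is immediate: if $\mathrm{int}\,R\subset\Sigma_i$ then $\mathrm{int}\,R\subset\mathcal{N}\cap\Sigma_i\subset\mathcal{N}_i$, and since $\mathcal{N}_i$ is closed and $R=\overline{\mathrm{int}\,R}$, we get $R\subset\mathcal{N}_i$. For $\subseteq$ I would use Lemma \ref{decompondon_i} to split $\mathcal{N}_i=(\mathcal{N}\cap\Sigma_i)\dot{\cup}(\mathcal{N}_i\cap\mathrm{Im}\,f)$. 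A point $p\in\mathcal{N}\cap\Sigma_i$ lies on some edge $R$; because $\Sigma_i$ is open, a whole subarc of $R$ around $p$ lies in $\Sigma_i$, so $R\cap\Sigma_i\neq\emptyset$ and Lemma \ref{interiordaaresta} gives $\mathrm{int}\,R\subset\Sigma_i$. A point $p\in\mathcal{N}_i\cap\mathrm{Im}\,f$ is, by Lemma \ref{arestanobordo} together with Lemma \ref{interiordaaresta}, a vertex of $\mathcal{N}$ lying on an edge $R$ with $\mathrm{int}\,R\subset\Sigma_i$. In both cases $p$ belongs to an edge of the asserted union, proving $\subseteq$. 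In particular $\mathcal{N}_i$ is a finite union of edges of $\mathcal{N}$, hence a finite graph, and every vertex of $\mathcal{N}_i$ is a vertex of $\mathcal{N}$.

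It remains to check that $\mathcal{N}_i$ enjoys the same structural properties as $\mathcal{N}$ inside the genus-zero surface-with-boundary $\overline{\Sigma}_i$. A vertex $p$ of $\mathcal{N}_i$ lying in $\mathrm{int}\,\overline{\Sigma}_i=\Sigma_i$ is an interior vertex of $\mathcal{N}$, and since $\Sigma_i$ is open every edge of $\mathcal{N}$ issuing from $p$ has a subarc near $p$ contained in $\Sigma_i$, so the entire equiangular star of $\mathcal{N}$ at $p$ is retained in $\mathcal{N}_i$; thus the equiangular crossing condition of \cite{Ch} is inherited verbatim. For a vertex $p$ of $\mathcal{N}_i$ lying on $\partial\overline{\Sigma}_i\subset\partial\Sigma\cup\mathrm{Im}\,f$, the argument of Lemma \ref{arestanobordo} produces an edge $R\ni p$ with $\mathrm{int}\,R\subset\Sigma_i$, so $p$ has at least one edge of $\mathcal{N}_i$; transversality along $\partial\Sigma$ is inherited from Proposition \ref{multiplicidadesigma1prop2}, while along the portions of $\mathrm{Im}\,f$ coming from maximal lines of $\mathcal{N}$ the meeting is transversal since $p$ is an interior vertex of $\mathcal{N}$ and no edge of the equiangular star can be tangent to the straight continuation constituting $\mathrm{Im}\,f$. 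Finally, to see that each edge of $\mathcal{N}_i$ lies on a maximal line or a circle of $\mathcal{N}_i$, I would start on an edge and follow the underlying nodal line of $\mathcal{N}$: at interior vertices of $\Sigma_i$ the equiangular structure provides the straight continuation, which remains in $\mathcal{N}_i$, and by finiteness the walk either closes up inside $\Sigma_i$ (a circle of $\mathcal{N}_i$) or terminates at two vertices of $\partial\overline{\Sigma}_i$ (a maximal line of $\mathcal{N}_i$). Hence $\mathcal{N}_i$ admits closed chains, and has the same properties as $\mathcal{N}$.

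The main obstacle is this last property, that every edge of $\mathcal{N}_i$ extends to a maximal line or a circle in $\overline{\Sigma}_i$. The delicate point is that $\mathrm{Im}\,f$ itself contains maximal lines of $\mathcal{N}$, so cutting along the chain severs genuine nodal lines of $\mathcal{N}$ at their crossings with $\mathrm{Im}\,f$; one must check that these crossings occur only at vertices of $\mathcal{N}$ (so that no edge of $\mathcal{N}_i$ is broken in its interior, which is exactly what Lemma \ref{interiordaaresta} guarantees) and that the severed pieces re-assemble into maximal lines with both endpoints on $\partial\overline{\Sigma}_i$. Once the local equiangular picture at the cut vertices is in hand, the reassembly is routine.
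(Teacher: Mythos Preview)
Your first half, the identification of $\mathcal{N}_i$ with the union of edges whose interior lies in $\Sigma_i$, is exactly the paper's argument: you invoke Lemmas~\ref{decompondon_i}, \ref{arestanobordo}, \ref{interiordaaresta} in the same way, and draw the same finiteness consequence.

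The second half diverges. The paper takes ``the same properties as $\mathcal{N}$'' to mean the list coming from Propositions~\ref{multiplicidadesigma1prop1}--\ref{multiplicidadesigma1prop2}: finiteness, the equiangular even-degree condition at interior vertices, and---crucially---that the \emph{total} number of arcs leaving each boundary component of $\overline{\Sigma}_i$ is even. For the untouched components $C_1,\dots,C_r$ this is inherited; for the new boundary curve $F$ built from $\mathrm{Im}\,f$ the paper uses a handshaking-style count (interior vertices have even degree, internal edges are counted twice, so the number of interior-to-boundary edges is even, forcing evenness on $F$). You do not verify this parity condition at all. Instead you replace it by a direct check that every edge of $\mathcal{N}_i$ extends to a maximal line or a circle in $\overline{\Sigma}_i$, via the ``follow the straight continuation'' walk. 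That derived property is what the paper actually uses downstream (existence of a closed chain in Proposition~\ref{descrevendoN}), so your route suffices for the application; but strictly speaking it does not establish the full package of ``same properties'' as the paper states it, since the even-parity-per-boundary-component does not follow from your maximal-line/circle conclusion alone. If you want to match the statement as written, add the short counting argument for $F$; if you only care about feeding Proposition~\ref{descrevendoN}, your version is adequate and arguably more direct.
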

\begin{proof}[\bf Proof]
If $p \in \mathcal{ N}_i$ then the Lemmas \ref{decompondon_i} and \ref{arestanobordo} guarantee that there is an edge $R$ of $\mathcal{N}$ such that $p \in R$ and $R\cap\Sigma_i\not=\emptyset$. The Lemma \ref{interiordaaresta} guarantee that $\mathrm{int}\,R\subset \Sigma_i$. Since $\mathcal{N}$ is closed, we get that $R\subset \mathcal{N}_i$, which implies the first statement. This characterization of $\mathcal{N}_i$ shows that this graph is finite. Note that the interior and boundary of $\overline{\Sigma}_i$, as a topological manifold, are $\mathrm{int}\,\overline{\Sigma}_i = \Sigma_i\cap\mathrm{int}\,\Sigma$ and $\partial\overline{\Sigma}_i = F\cup C_1\cup\ldots\cup C_r$, where $C_1,\ldots,C_r$ are the components of the boundary of $\Sigma$ that are contained in $\Sigma_i$ and $F$ is a connected curve formed by the remaining points: if $f$ is the juxtaposition $f_1*\ldots*f_k$ then $F$ is given as $\tilde{f}_1*\ldots*\tilde{f}_k$, where $\tilde{f}_i = f_i$ if $f_i$ is an edge of $\mathcal{N}$ and $\tilde{f}_j = f_j^c$ if $f_j$ is a line in $\partial\Sigma$ and $f_j^c$ denotes the component of the boundary `facing' towards $\Sigma_i$.
\begin{figure}[H]
\centering \includegraphics[scale=0.170]{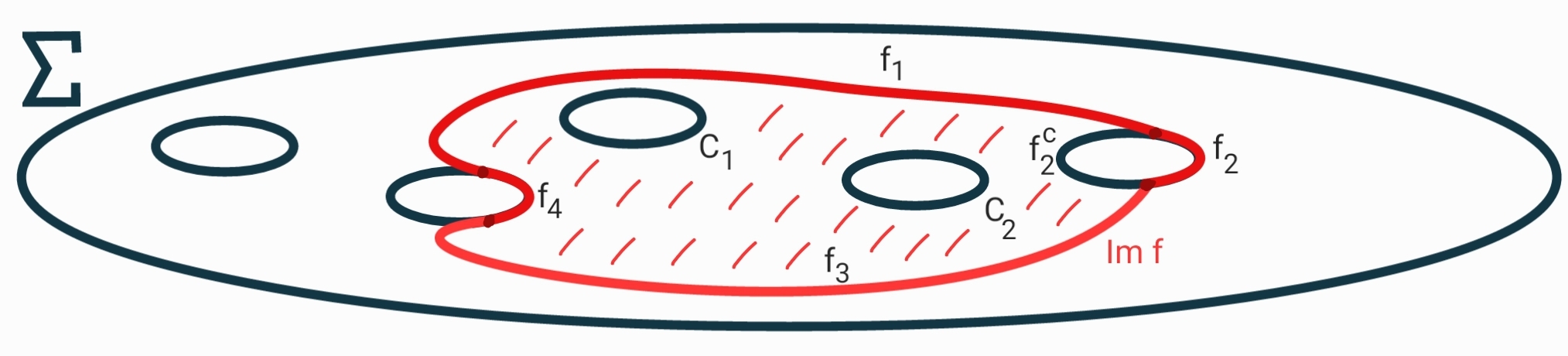}
\caption{The boundary of a component of $\Sigma\setminus \mathrm{Im}f$. Here, $\Sigma$ is a plane region with the removal of five open disks, $\Sigma_1$ is the highlighted region, $\mathrm{Im}f = f_1*f_2*f_3*f_4$ and $F = f_1*f_2^c*f_3*f_4$.} 
\label{descricaofronteirasigmai}
\end{figure}

If $p$ is a vertex of $\mathcal{N}_i$ in $\mathrm{int}\overline{\Sigma}_i$ then there is an even number of edges starting from $p$ and forming an equiangular system, since $\Sigma_i$ is opened in $\Sigma$. To finish the proof, we need to show that the total number of edges that starts in a component of the boundary is even. This is true for every element $C_i$, $i=1,\ldots,r$. To conclude the same for $F$, note that, since each vertex in the interior departs an even number of edges and each edge connecting two of these vertices is counted twice in this total, then the total of edges that start in a interior vertices and arives in the the boundary is an even number. Thus, it is immediate that the total number of edges departing from the boundary is an even number, which guarantees that $F$ has the same property.
\end{proof}

If $f$ is a closed chain the Proposition \ref{grafon_i} guarantees a decomposition of the graph $\mathcal{N}$, because $(\overline{\Sigma}_i,\mathcal{N}_i )$ has the same properties as $(\Sigma,\mathcal{N})$ and $A_{\mathcal{N}} = A_{\mathcal{N}_1}\,\dot{\cup}\,A_{\mathrm{Im}\,f}\,\dot{\cup}\,A_{\mathcal{N}_2}$, where $A_X$ is the set of edges of $\mathcal{N}$ included in the set $X$.

\begin{prop}\label{descrevendoN}
If $\Sigma\setminus\mathcal{N}$ has exactly two nodal domains and $f : \mathbb{S}^1\rightarrow \Sigma$ is a closed chain then $\mathcal{N}$ is equal to the union of the edges of $\mathcal{N}$ that are in the image of $f$.
\end{prop}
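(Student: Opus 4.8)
The plan is to reduce the statement to a purely local fact about where $u$ vanishes. Since $\mathcal{N}$ is the union of its edges (it has no isolated vertices), the assertion is equivalent to saying that every edge of $\mathcal{N}$ lies in $\mathrm{Im}\,f$. By Proposition \ref{grafon_i} I have the decomposition $A_{\mathcal{N}} = A_{\mathcal{N}_1}\,\dot{\cup}\,A_{\mathrm{Im}\,f}\,\dot{\cup}\,A_{\mathcal{N}_2}$, and the same proposition identifies $A_{\mathcal{N}_i}$ with the set of edges $R$ such that $\mathrm{int}\,R\subset\Sigma_i$. So it would suffice to prove that $\mathcal{N}\cap\Sigma_i=\emptyset$ for $i=1,2$, i.e. that $u$ has no zeros inside the open sets $\Sigma_1$ and $\Sigma_2$; granting this, no edge can have its interior in $\Sigma_i$, forcing $A_{\mathcal{N}_i}=\emptyset$ and hence $A_{\mathcal{N}}=A_{\mathrm{Im}\,f}$.

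The heart of the argument is a component count. I would write $\ell$ for the union of the lines in $\partial\Sigma$ occurring in the closed chain; since every maximal line of $f$ is contained in $\mathcal{N}$, one has $\mathcal{N}\cup\mathrm{Im}\,f=\mathcal{N}\cup\ell$ with $\ell\subset\partial\Sigma$. Removing first $\mathrm{Im}\,f$ and then $\mathcal{N}$ gives
\begin{equation*}
\Sigma\setminus(\mathcal{N}\cup\ell)=(\Sigma_1\setminus\mathcal{N})\,\dot{\cup}\,(\Sigma_2\setminus\mathcal{N}).
\end{equation*}
On the other hand, by hypothesis $\Sigma\setminus\mathcal{N}$ consists of exactly two nodal domains $\Omega$ and $\Omega'$; because $\ell$ is a union of arcs lying in $\partial\Sigma$, deleting $\ell$ only removes boundary points and therefore leaves $\Omega\setminus\ell$ and $\Omega'\setminus\ell$ nonempty and connected. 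Thus $\Sigma\setminus(\mathcal{N}\cup\ell)$ has exactly two components. Comparing the two descriptions, the numbers of components of $\Sigma_1\setminus\mathcal{N}$ and of $\Sigma_2\setminus\mathcal{N}$ sum to two; as each of these open sets is nonempty, I would conclude that both $\Sigma_1\setminus\mathcal{N}$ and $\Sigma_2\setminus\mathcal{N}$ are connected.

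To close the argument I would argue by contradiction: if $\mathcal{N}\cap\Sigma_i\neq\emptyset$ for some $i$, then, using that $\Sigma_i$ is open, that $\mathcal{N}$ is a finite graph, and that $S_u$ is finite (Proposition \ref{multiplicidadesigma1prop1}), I can pick a point $q\in\Sigma_i$ in the interior of an edge of $\mathcal{N}$ with $\nabla u(q)\neq 0$. Then $u$ changes sign transversally across the nodal arc at $q$, so a small neighborhood of $q$ meets $\Sigma_i\setminus\mathcal{N}$ in points where $u$ takes both signs; this contradicts the connectedness of $\Sigma_i\setminus\mathcal{N}$, on which the continuous function $u$ is nowhere zero and hence of constant sign. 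Hence $\mathcal{N}\cap\Sigma_i=\emptyset$ for $i=1,2$, and the reduction of the first paragraph finishes the proof.

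I expect the only delicate point to be the bookkeeping in the counting step, namely making sure that deleting the boundary arcs $\ell$ neither splits nor merges the two nodal domains, so that the hypothesis of exactly two nodal domains transfers faithfully to the decomposition $\Sigma_1\,\dot{\cup}\,\Sigma_2$. Once that is in place, the transversal sign-change argument is routine in light of the structural results already established for $\mathcal{N}$.
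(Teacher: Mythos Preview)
Your argument is correct and proves what is needed for the paper, but it follows a genuinely different route from the paper's proof and, as written, establishes a slightly weaker statement than the one in the proposition.

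The paper's proof is purely topological and very short: starting from the same edge decomposition $A_{\mathcal{N}} = A_{\mathcal{N}_1}\,\dot\cup\,A_{\mathrm{Im}\,f}\,\dot\cup\,A_{\mathcal{N}_2}$, it assumes $A_{\mathcal{N}_i}\neq\emptyset$ and invokes Proposition~\ref{grafon_i} to say that $(\overline{\Sigma}_i,\mathcal{N}_i)$ again has all the structural properties of $(\Sigma,\mathcal{N})$; hence $\mathcal{N}_i$ admits a closed chain $g$, and since $\overline{\Sigma}_i$ has genus zero, $\mathrm{Im}\,g$ separates $\overline{\Sigma}_i$ into two pieces. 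Together with $\Sigma_{3-i}$ this forces at least three components of $\Sigma\setminus\mathcal{N}$, contradicting the hypothesis. No function $u$ appears.

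Your route instead first does a component count---showing $\Sigma_i\setminus\mathcal{N}$ is connected by comparing the two descriptions of $\Sigma\setminus(\mathcal{N}\cup\ell)$---and then derives the contradiction from the \emph{sign change} of $u$ across an edge interior to $\Sigma_i$. The counting step is sound (your caveat about deleting $\ell$ is handled by the fact that $\Omega\cap\mathrm{int}\,\Sigma$ is connected and dense in the open set $\Omega$, so removing boundary arcs cannot disconnect it), and the transversal sign-change step is fine once one has a point $q$ with $\nabla u(q)\neq 0$, which Proposition~\ref{multiplicidadesigma1prop1} guarantees. However, this last step uses $u$ explicitly, so your proof establishes the result only when $\mathcal{N}=\mathcal{N}_u$ is the nodal set of an eigenfunction, whereas the proposition in the paper is stated for an abstract graph $\mathcal{N}$ ``with the same properties as a nodal set.'' This is harmless for the applications (only nodal sets are ever used), but if you want to match the paper's generality you could replace the sign argument by the paper's observation: a nonempty $\mathcal{N}_i$ contains a closed chain, and that chain already disconnects $\Sigma_i\setminus\mathcal{N}$, contradicting the connectedness you just proved.
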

\begin{proof}[\bf Proof]
Writing $\Sigma\setminus\mathrm{Im}\,f = \Sigma_1\,\dot{\cup}\,\Sigma_2$ we get that $A_{\mathcal{N}} = A_{\mathcal{N}_1}\,\dot{\cup}\,A_{\mathrm{Im}\,f}\,\dot{\cup}\,A_{\mathcal{N}_2}$. Assume $A_{\mathcal{N}_i}\not=\emptyset$, $i\in \{1,2\}$. Since $(\overline{\Sigma}_i,\mathcal{N}_i)$ has the same properties as $(\Sigma,\mathcal{N})$ there is a closed chain $g : \mathbb{S}^1\rightarrow \overline{\Sigma}_i$. Now, $\overline{\Sigma}_i$ has genus zero, where can we write that $\overline{\Sigma}_i\setminus\mathrm{Im}\,g = A\,\dot{\cup}\,B$, which implies that $\Sigma\setminus \mathcal{N}$ has at least three domains, a contradiction.
\end{proof}

The Proposition \ref{descrevendoN} give us a format for the graph $\mathcal{N}$ when $\Sigma$ has genus zero, from which we extracted two patterns:
\begin{enumerate}[label=\roman*.]
\item $\mathcal{N}$ is the image of a circle $f : \mathbb{S}^1\rightarrow \Sigma$ with the property that the intersection $f(\mathbb{S}^1)\cap\partial\Sigma$ is empty;
\item there are distinct connected components $C_1,\ldots,C_r$ of the boundary of $\Sigma$ and maximal lines $f_1,\ldots,f_r$, so that $f_i$ connects $C_i$ to $C_{i+1}$ , where we call $C_{r+1} = C_1$, and with the property that two of these maximal lines can only intersect each other at their ends, where we get that $\mathcal{N} = \cup_{i=1}^r\mathrm{Im}f_i$.
\end{enumerate}

Topologically, we can see an annulus as a closed disk from which we have removed an open disk from its interior. Following the description above, we obtain the following topological possibilities for $\mathcal{N}$:
\begin{figure}[H] 
\centering \includegraphics[scale=0.12]{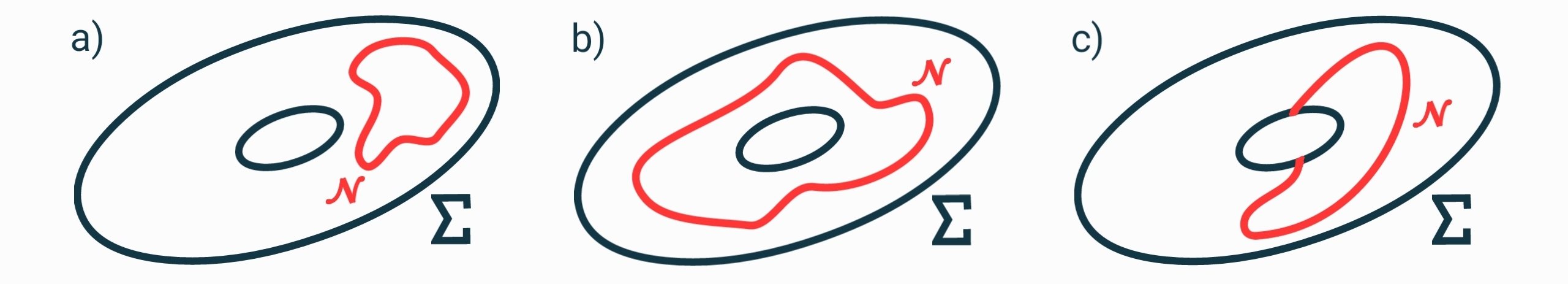}
\centering \includegraphics[scale=0.12]{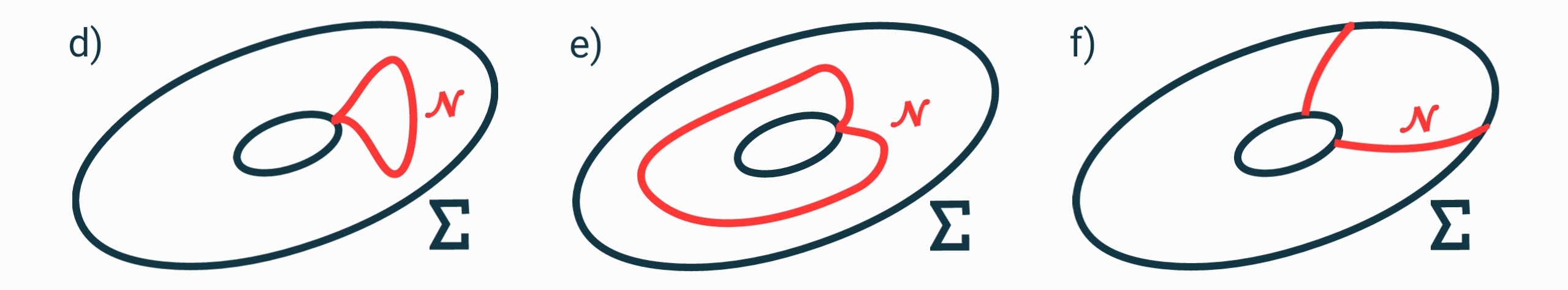}
\centering \includegraphics[scale=0.12]{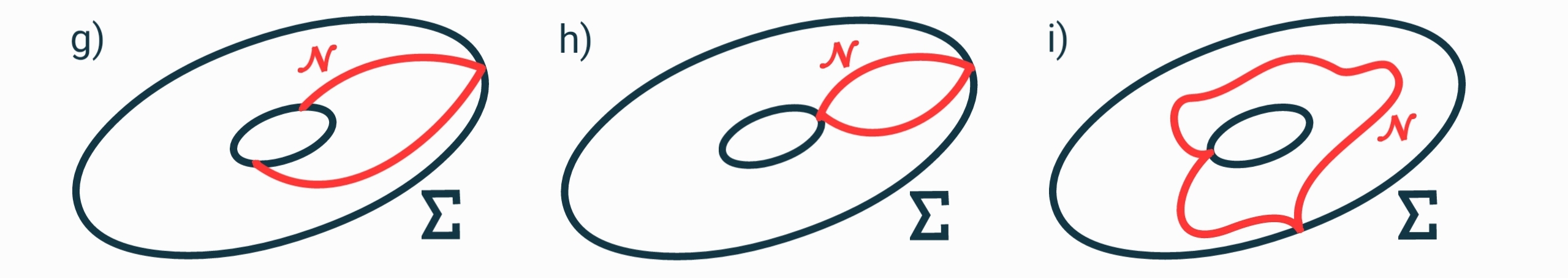}

\label{modpart1}
\end{figure}




As a consequence of Lemma \ref{bordoedominiosnodais}, we can consider the following: $\partial\Sigma\cap \Sigma_1 \not=\emptyset$ and $\partial\Sigma\cap\Sigma_2\not=\emptyset$, which excludes the cases a), d) and h).

\section{Involutive isometries and the nodal set in an annulus.}

Let $\rho :\Sigma \rightarrow \Sigma$ and $u : \Sigma\rightarrow \mathbb{R}$ be an arbitraty maps. We say that $u$ is \textit{$\rho$-even} if $u\circ\rho = u$ and that $u$ in \textit{$\rho$-odd} if $u\circ \rho = -u$. We say that $u$ is \textit{$\rho$-invariant} if $u$ is $\rho$-even or $\rho$-odd. 
\begin{lem}\label{parenodal}
If $\rho : \Sigma\rightarrow \Sigma$ is a diffeomorphism and $u$ is an eigenfunction $\rho$-invariant then $\rho$ preserves the nodal set of $u$. Furthermore, if $u$ has exactly two nodal domains we have that if $u$ is $\rho$-even then $\rho$ preserves the nodal domains of $u$ and if $u$ is $\rho$-odd then $\rho$ interchanges the nodal sets of $u$.
\end{lem}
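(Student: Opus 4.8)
The plan is to split the argument along the statement's own structure: first prove that $\rho$ preserves $\mathcal{N}_u$ for any $\rho$-invariant eigenfunction, and then, under the two-nodal-domain hypothesis, use that a homeomorphism permutes connected components to pin down the behavior of $\rho$ on the domains according to the parity of $u$.

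For the first claim I would work directly from the definition. Being $\rho$-invariant means $u\circ\rho = \pm u$ holds \emph{globally} on $\Sigma$ (with one fixed sign). Hence for any $p\in\mathcal{N}_u$ we have $u(\rho(p)) = \pm u(p) = 0$, so $\rho(p)\in\mathcal{N}_u$ and thus $\rho(\mathcal{N}_u)\subseteq\mathcal{N}_u$. Because $\rho$ is a diffeomorphism, its inverse also satisfies $u\circ\rho^{-1} = \pm u$ (apply $\rho^{-1}$ to $u\circ\rho=\pm u$), so the same reasoning gives $\rho^{-1}(\mathcal{N}_u)\subseteq\mathcal{N}_u$, and therefore $\rho(\mathcal{N}_u)=\mathcal{N}_u$. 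This shows $\rho$ preserves the nodal set.

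For the second claim, assume $u$ has exactly two nodal domains $\Omega$ and $\Omega'$. Since $\rho$ fixes $\mathcal{N}_u$ setwise, it restricts to a homeomorphism of $\Sigma\setminus\mathcal{N}_u$, and a homeomorphism carries connected components to connected components; as there are exactly two, either $\rho(\Omega)=\Omega$ and $\rho(\Omega')=\Omega'$, or $\rho(\Omega)=\Omega'$ and $\rho(\Omega')=\Omega$. To decide which, I would invoke item \textit{2} of Lemma \ref{bordoedominiosnodais}, which lets us fix signs, say $u>0$ on $\Omega$ and $u<0$ on $\Omega'$. If $u$ is $\rho$-even, then for $p\in\Omega$ we get $u(\rho(p))=u(p)>0$, forcing $\rho(p)\in\Omega$; hence $\rho(\Omega)=\Omega$ and, symmetrically, $\rho(\Omega')=\Omega'$, so $\rho$ preserves the nodal domains. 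If instead $u$ is $\rho$-odd, then $u(\rho(p))=-u(p)<0$ for $p\in\Omega$, so $\rho(p)\in\Omega'$, giving $\rho(\Omega)=\Omega'$ and $\rho(\Omega')=\Omega$, i.e. $\rho$ interchanges the two nodal domains.

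The argument is essentially routine, so I do not anticipate a serious obstacle; the only point requiring mild care is ensuring the sign in $u\circ\rho=\pm u$ is a single global choice (so that the parity genuinely dictates a consistent behavior on all of $\Sigma$), together with the clean use of connectedness to reduce to exactly two permutation possibilities. Everything else follows from the already-established structural facts about nodal domains in Lemma \ref{properties} and Lemma \ref{bordoedominiosnodais}.
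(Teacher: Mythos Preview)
Your proof is correct and follows essentially the same approach as the paper: first use $\rho^{-1}$-invariance to get $\rho(\mathcal{N}_u)=\mathcal{N}_u$, then use connectedness of the two nodal domains to reduce to the preserve/interchange dichotomy, and finally invoke item \textit{2} of Lemma \ref{bordoedominiosnodais} (opposite signs on the two domains) to decide according to parity. The paper's argument is just a more compressed version of yours.
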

\begin{proof}[\bf Proof]
Since $u$ is $\rho$-invariant we have that $u$ is also $\rho^{-1}$-invariant, what gives us that $\rho(\mathcal{N}_u) = \mathcal{N}_u$. Since the nodal domains are connect and there are exactly two nodal domains it is immediate that $\rho$ preserve or interchange them. The result then follows from the fact that $u$ has opposite signs in the nodal domains (Lemma \ref{bordoedominiosnodais}).
\end{proof}

We say that a application $\rho : \Sigma\rightarrow \Sigma$ is \textit{involutive} if $\rho^2 = \mathrm{Id}_\Sigma$. We will now proceed in the direction of showing that an involutive isometry generates a decomposition of the eigenspace $\mathcal{E}_i$. Define $\rho_i : \mathcal{E}_i\rightarrow \mathcal{E}_i$ as $\rho_i(u) = u\circ\rho$. Choose a point $p \in \Sigma$ and denote $q = \rho(p)$. It is a standard calculation to verify that 
\begin{center}
$\Delta \rho_i(u)_p + \alpha\rho_i(u)_p = \Delta u_q + \alpha u(q) \ \ \ \ \ \textrm{and} \ \ \ \ \ d(\rho_i(u))_p(\nu) = du_q(\nu)$
\end{center}
which guarantees that $\rho_i$ is well defined. Next, it is clear that $\rho^{-1}$ determines an application $\rho^{-1}_i$ which is the inverse of $\rho_i$. Since both applications are linear we get that $\rho_i$ is an isomorphism. Since $\rho_i^2(u) = u\circ\rho^2 = u$ we conclude that $\rho_i$ is an involutive isomorphism in $\mathcal{E}_i$. Consider
$$\mathcal{A}_{i} := \ker (\mathrm{Id}_{\mathcal{E}_i} + \rho_i) \ \ \ \ \ \ \ \textrm{and} \ \ \ \ \ \ \ \mathcal{S}_{i} := \ker (\mathrm{Id}_{\mathcal{E}_i} -\rho_i)$$
which are the subspaces of the eigenfunctions $\rho$-odd and $\rho$-even, respectively. Since we can write a function $u$ as $\frac{1}{2}(u + \rho_i(u)) + \frac{1}{2}(u-\rho_i(u))$ and $\rho_i$ is involutive, it is immediate that
\begin{equation}
\mathcal{E}_{i} = \mathcal{A}_{i}\oplus \mathcal{S}_{i}
\end{equation}

Using an isometry with an antipodal behavior we will establish a cha\-racte\-rization of the eigenspace $\mathcal{E}_1$, which is one of the central pieces for the proof of the Main Theorem. For the proof we will follow some of the ideas presented by Kusner and McGrath \cite{Ku-McG}, in their work with the Steklov problem, along with the possibilities obtained for the nodal set in an annulus for the Steklov problem with frequency $\alpha$.

\begin{teo}\label{generale1c}
Assume $\Sigma$ is an annulus admitting an orientation-reversing isometry $\rho$, which is involutive and has no fixed points. Furthermore, suppose that there is a three-dimensional space $\widetilde{\mathcal{C}}$ formed by $\rho$-odd Steklov eigenfunctions with frequency $\alpha$, where $\alpha$ is a supercritical frequency, such that each nonzero function in $\tilde{C}$ has exactly two nodal domains. Then $\mathcal{E}_{1} = \widetilde{\mathcal{C}}$.
\end{teo}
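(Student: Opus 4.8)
The plan is to establish the two inclusions $\widetilde{\mathcal{C}}\subseteq\mathcal{E}_1$ and $\dim\mathcal{E}_1\le 3$; since $\dim\widetilde{\mathcal{C}}=3$, these together force $\mathcal{E}_1=\widetilde{\mathcal{C}}$. The bound $\dim\mathcal{E}_1\le 3$ is immediate from Theorem \ref{multiplicidadesigmai} with $\gamma=0$ and $i=1$, whose Dirichlet hypothesis holds because $\alpha$ is supercritical (so $d=0$ in \eqref{countingmultiplicity}). Two preliminary reductions are essentially free. First, $\widetilde{\mathcal{C}}$ is contained in a single eigenspace $\mathcal{E}_k$: a linear combination of Steklov eigenfunctions for distinct eigenvalues is not an eigenfunction, so all nonzero elements of the space $\widetilde{\mathcal{C}}$ share one eigenvalue $\sigma_k$, and $k\ge 1$ because a function with two nodal domains changes sign and hence cannot lie in $\mathcal{E}_0$ by item \textit{2} of Lemma \ref{properties}. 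Second, since $\rho$ is a fixed-point-free, orientation-reversing involution of the annulus, the quotient $\Sigma/\rho$ is a M\"obius band and $\rho$ interchanges the two boundary circles $C_1,C_2$; consequently, as $\rho$ preserves $\partial\Sigma$ and its length measure, every $\rho$-odd function is $L^2(\partial\Sigma)$-orthogonal to the positive ($\rho$-even) generator of $\mathcal{E}_0$.

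The whole problem then reduces to showing $\sigma_k=\sigma_1$. I would first record a structural fact: \emph{no eigenfunction with exactly two nodal domains is $\rho$-even}. Indeed, if $u$ were $\rho$-even with two nodal domains $\Omega,\Omega'$, Lemma \ref{parenodal} shows $\rho$ preserves each of them. By the classification of Proposition \ref{descrevendoN} together with Lemma \ref{bordoedominiosnodais}, in an annulus these domains are either two disks (nodal set a pair of boundary-joining arcs) or two sub-annuli (nodal set a non-contractible circle). In the disk case $\rho$ would restrict to a free involution of a closed disk, impossible since a disk has odd Euler characteristic $\chi=1$; in the sub-annulus case the $\rho$-invariant domain containing $C_i$ would also contain $\rho(C_i)$, contradicting that a sub-annulus meets only one component of $\partial\Sigma$. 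Applying this to the even part $w_e$ of any $w\in\mathcal{E}_1$: if $w_e\ne 0$ then $w_e$ is a $\sigma_1$-eigenfunction, hence has exactly two nodal domains by Lemma \ref{doisdominioseme1}, contradicting the structural fact. Thus $w_e=0$, so $\mathcal{E}_1$ consists entirely of $\rho$-odd functions; since $\sigma_0$ is even, $\sigma_1$ is then the smallest $\rho$-odd eigenvalue.

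To conclude I would pin down the eigenvalue by proving that \emph{every} $\rho$-odd eigenfunction with exactly two nodal domains has eigenvalue $\sigma_1$. For such a $u$, Lemma \ref{parenodal} shows $\rho$ interchanges its two nodal domains, and the same classification, now combined with freeness of $\rho$, restricts $\mathcal{N}_u$ to a non-contractible circle or to a pair of arcs joining $C_1$ to $C_2$ and swapped by $\rho$ (a single joining arc is excluded, as an involution of an arc exchanging its endpoints must fix its midpoint). Passing to the M\"obius quotient, $u$ descends to a section of the nontrivial line bundle associated with $\Sigma\to\Sigma/\rho$, whose zero set has \emph{connected} complement precisely because $u$ has only two nodal domains; this is the signature of a first eigenfunction of the $\rho$-odd (twisted) problem, whereas higher odd eigenfunctions acquire extra nodal domains by the Courant-type count of Theorem \ref{hasssher}. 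Hence all such $u$, and in particular all of $\widetilde{\mathcal{C}}$, lie in the first $\rho$-odd eigenspace; combined with the previous paragraph, that eigenspace is $\mathcal{E}_1$ and its eigenvalue is $\sigma_1$. This gives $\widetilde{\mathcal{C}}\subseteq\mathcal{E}_1$, and the multiplicity bound then yields $\mathcal{E}_1=\widetilde{\mathcal{C}}$.

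I expect the genuine obstacle to be exactly this last equivariant nodal-count step, namely upgrading ``exactly two nodal domains'' from an \emph{upper} constraint (already supplied by Theorem \ref{hasssher}) to the assertion that two nodal domains force the \emph{first} odd eigenvalue. The delicate point is that the Fraser--Schoen/Cheng vanishing-order arguments apply only to genuine eigenfunctions, so they cannot be run on the mixed-eigenvalue span of a hypothetical lower eigenfunction together with $\widetilde{\mathcal{C}}$; instead the symmetry must be injected into the nodal graph analysis of Propositions \ref{multiplicidadesigma1prop1} and \ref{multiplicidadesigma1prop2} read on the quotient, where the nontriviality of the bundle over the M\"obius band simultaneously forces every odd eigenfunction to vanish and controls the growth of the nodal-domain count with the odd eigenvalue index.
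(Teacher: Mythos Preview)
Your reductions through the paragraph showing $\mathcal{E}_1=\mathcal{A}_1$ are sound and parallel the paper's Steps~1--2 (your Euler-characteristic/disk argument is a clean variant of the paper's ``$C$ separates $\Sigma$ and $\rho$ swaps the sides'' argument). The gap is exactly where you flag it: the claim that a $\rho$-odd eigenfunction with two nodal domains must lie in the \emph{first} odd eigenspace. Theorem~\ref{hasssher} is a Courant-type \emph{upper} bound, so it cannot force higher eigenfunctions to ``acquire extra nodal domains''; the converse of Courant is false, and nothing in the paper (or in the literature you cite) gives a lower bound on nodal count from the eigenvalue index, even in the twisted problem on the M\"obius quotient. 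So your final implication ``two nodal domains $\Rightarrow$ first odd eigenvalue'' is unproved, and as stated the argument does not close.

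The paper avoids this obstruction entirely: instead of arguing from nodal count to eigenvalue index, it proves $\widetilde{\mathcal{C}}\subset\mathcal{E}_1$ via orthogonality (Lemma~\ref{properties}, item~4) by showing that for every nonzero $u\in\mathcal{E}_1$ there exists $\phi\in\widetilde{\mathcal{C}}$ with $\langle u,\phi\rangle_{L^2(\partial\Sigma)}\neq 0$. This is done by a case analysis on the topological type of $\mathcal{N}_u$ (after the reductions leave only three possible configurations) and then exploiting $\dim\widetilde{\mathcal{C}}=3$: if $u$ has no sign change on the boundary circles, one imposes two linear conditions $\phi(p)=0$, $d\phi_p(V)=0$ at a boundary point to force $\nabla\phi(p)=0$, so by Proposition~\ref{multiplicidadesigma1prop1} at least two nodal arcs emanate from $p$, pinning down $\mathcal{N}_\phi$ and making $\pm\phi$ agree in sign with $u$ on $\partial\Sigma$; if $u$ changes sign on a boundary circle at $p,q$, one imposes $\phi(p)=\phi(q)=0$ to match the zeros and again get sign agreement. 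Either way $\langle u,\phi\rangle_{L^2(\partial\Sigma)}\neq 0$. This is the missing idea in your proposal: use the three degrees of freedom in $\widetilde{\mathcal{C}}$ to \emph{construct} a sign-matching $\phi$, rather than trying to deduce the eigenvalue from the nodal count.
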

\begin{proof}[\bf Proof]
Step 1: we will show that we can choose a topological circle $C$ in $\mathrm{int}\Sigma$ such that $\Sigma\setminus C = A\dot{\cup}B$ with $\rho(C) = C$ and $\rho(A)=B$. Take $p \in \Sigma$ and consider $\beta : [0,1] \rightarrow \Sigma$ a continuous curve that connects $p$ to $\rho(p)$. We can restrict the domain of $\beta$ to an interval $[a,b]\subset [0,1]$ so that $\rho(\beta(a)) = \beta(b)$ and $\rho(\beta(t))\not=\beta(s)$, for any $t,s \in (a,b)$. To see this, consider $X = \{t \in [0,1] \ ; \ \exists \ s > t \ ; \ \rho(\beta(t)) = \beta(s)\}$. Choosing an increasing sequence that converges to $a:=\sup X$ we will have, from the definition of $X$, another sequence that converges to a value $b \geq a$. Since $\rho$ is fixed-point free it is immediate that $b > 0$. From the definition of $X$ we will have that $[a,b]$ is the desired interval. 

Now, consider $p \in \mathrm{int}\Sigma$ and $\beta : [0,1]\rightarrow \mathrm{int}\,\Sigma$ a smooth, unit speed, injetive curve that connects $p$ to $\rho(p)$ and with the property that $\rho(\beta(t))\not=\beta(s)$, for $t,s\in(0,1)$. Define $\gamma := \rho\circ\beta$ and call $C$ the image of the juxtaposition $\beta*\gamma$. We have that $\rho(C) = C$ and, since $\Sigma$ is orientable with genus zero, $\Sigma\setminus C = A\dot{\cup}B$, where $A$ and $B$ are open connected sets. Consider $J_1$ a unit vector field along $\beta$ such that $\{\beta'(t),J_1(t)\}$ is an orthonormal set and such that $J_1(t)$ points to the set $B$, for $t \in (0,1)$. Define $J_2$ in the same way, but with respect to $\gamma$. Observe that $\{\beta',J_1\}$ and $\{\gamma',J_2\}$ have the same orientation. Choose $t_0\in (0,1)$. As $\rho$ reverses the orientation and $\rho_*(\beta'(t_0)) = \gamma'(t_0)$ we have that $\rho_*(J_1(t_0)) = -J_2(t_0)$, which implies that $\rho(A) = B$.

Step 2: since we can decompose $\mathcal{E}_{1} = \mathcal{A}_{1}\oplus \mathcal{S}_{1}$, we will use the first statement to show, by contradiction, that $\mathcal{S}_1 = \{0\}$. Assume there is a nonzero function $u \in \mathcal{S}_1$.  By hypothesis, $u$ has exactly two nodal domains $\Omega$ and $\Omega'$. Consider $\beta$ as in the first step with the condition that the imagem of $\beta$ is in $\Omega\cap \mathrm{int}\,\Sigma$. As $u$ is a $\rho$-even function we have from the Lemma \ref{parenodal} that $\rho(\Omega) = \Omega$ and $\rho(\Omega') = \Omega'$. It follows that $C\subset \Omega$ and, in particular, we get that $\Omega'$ must be contained in $A$ or in $B$. If $\Omega'\subset A$ then $\Omega'= \rho(\Omega')\subset \rho(A) = B$, a contradiction. The same goes for $\Omega'\subset B$. So, $\mathcal{E}_1 = \mathcal{A}_1$.

Step 3: if we show that $\tilde{\mathcal{C}}\subset \mathcal{A}_1$, we will complete the proof, since we have, by hypothesis, that $\dim\tilde{\mathcal{C}} = 3$, that $\mathcal{E}_1 = \mathcal{A}_1$ and, from Theorem \ref{multiplicidadesigmai}, that $\dim\mathcal{E}_1 \leq 3$, which guarantees $\mathcal{E}_1 = \tilde{\mathcal{C}}$. As $\tilde{\mathcal{C}}$ is a subspace formed by eigenfunctions, there exists $i$ such that $\tilde{\mathcal{C}}\subset \mathcal{E}_i$. To prove that $\tilde{\mathcal{C}}\subset \mathcal{E}_1$ we just need to show, according to the orthogonality of eigenspaces (Lemma \ref{properties}), that there are $u\in\mathcal{E}_1$ and $\phi\in\tilde {\mathcal{C}}$ such that $\langle u,\phi\rangle_{L^2(\partial\Sigma)}\not=0$. Before going in this direction, we will first, through the existence of the isometry, reduce the number of possibilities of the nodal set. By hypothesis, every function $\phi \in \tilde{\mathcal{C}}$ has exactly two nodal domains, whereas the Lemma \ref{doisdominioseme1} guarantees the same for a function $u \in \mathcal{E}_1$. Then, the possibilities obtained for the nodal set are valid for eigenfunctions in both sets. Call $B$ and $B'$ the components of the boundary of $\Sigma$ and consider $\mathcal{N}$ a nodal set. Since $\rho$ reverses the orientation, we have that $\rho(B) = B'$. In particular, $\rho(\mathcal{N}\cap B) = \mathcal{N}\cap B'$, so the number of vertices of $\mathcal{N}$ in $B$ is equal to the number of vertices of $\mathcal{N}$ in $B'$. This fact excludes the cases c), e) and g) described in Figure \ref{modpart1}, leaving only the following three possibilities:
\begin{figure}[H] 
\centering \includegraphics[scale=0.125]{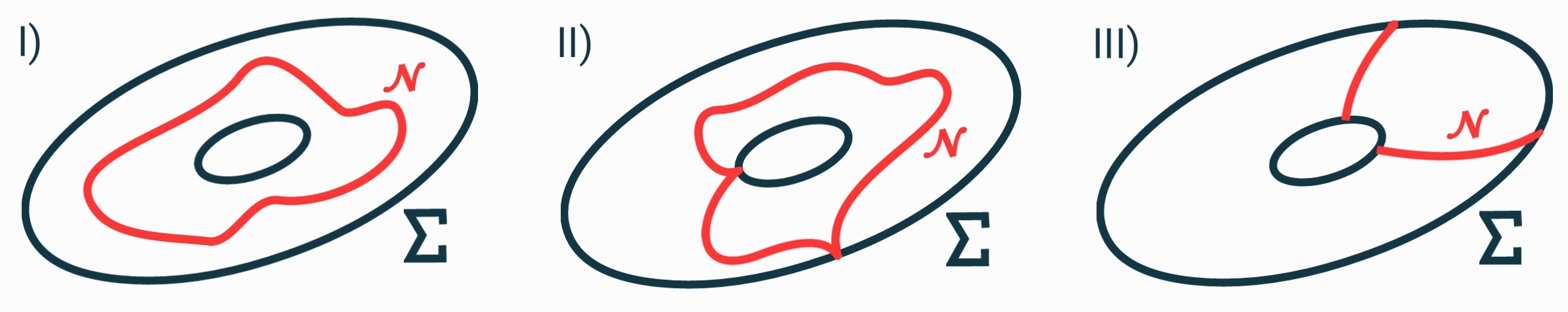}
\caption{Possibilities for the nodal set.}
\label{modelosfinais}
\end{figure}

Another information we will need is that functions in $\mathcal{E}_1$ and $\tilde{\mathcal{C}}$ invert the sign in $\partial\Sigma$. We will now show that for every $u \in \mathcal{E}_1$ there is $\phi \in \tilde{\mathcal{C}}$ with $\langle u,\phi\rangle_{L^2(\partial\Sigma)}\not=0$, which guarantees what we want. To do so, we can analyze whether or not $u$ changes sign in a connected component of $\partial\Sigma$. Assume first that $u$ does not change sign in the components of $\partial\Sigma$. It follows that $\mathcal{N}_u$ has one of the first two formats in the Figure \ref{modelosfinais}. Choose $p \in \partial\Sigma$ any point, take $V \in T_p\partial\Sigma$ unitary and define $F : \tilde{\mathcal{C}}\rightarrow \mathbb{R}^2$ by $F(\phi) = (\phi(p),d\phi_p(V))$. The linearity of $F$ guarantees that there is a non-zero $\phi$ with $\phi(p) = 0$ and $d\phi_p(V) = 0$. Since the normal derivative of $\phi$ in $p$ is zero, we conclude that $\nabla\phi(p) = 0$. The Proposition \ref{multiplicidadesigma1prop1} then guarantees that there are at least two arcs of $\mathcal{N}_\phi$ starting from $p$, which shows that this set has the format II in the Figure \ref{modelosfinais}. It follows that, with the exception of the points $p$ and $\rho(p)$, one of the functions $\phi$ and $-\phi$ has the same sign as $u$, which implies that $\langle u,\phi \rangle_{L^2(\partial\Sigma)}\not=0$. Now suppose that $u$ changes sign in one of the components of $\partial\Sigma$. It follows that $\mathcal{N}_u$ has the format III in the Figure \ref{modelosfinais}. In particular, we can consider $p,q$ points in a component of $\partial\Sigma$ where $u(p) = u(q) = 0$. Defining a linear transformation $F : \tilde{\mathcal{C}}\rightarrow \mathbb{R}^2$ by $F(\phi) = (\phi(p),\phi(q))$ we conclude that there is $\phi \in \tilde{\mathcal{C}}$ non-zero with $\phi(p) = \phi(q) = 0$, therefore $\mathcal{N}_\phi$ also follows possibility III. Hence, $\phi$ or $-\phi$ has the same sign as $u$ in $\partial\Sigma$, which shows that $\langle u,\phi\rangle_{L^2(\partial\Sigma)}\not=0$.
\end{proof}

\section{Free boundary minimal submanifolds in $\mathbb{B}^n_\varepsilon(r)$.}\label{fbmmsf}

Our goal here is to study the Steklov problem (with frequency) on a geodesic ball in a space form. Therefore, we will introduce a notation that embrace the cases of both the unit sphere and the hyperbolic space. A point $p \in \mathbb{R}^{n+1}$ will be described as $p = (x,y)$, where $x\in \mathbb{R}$ e $y = (y^1,\ldots,y^n) \in \mathbb{R}^n$. The canonical base of $\mathbb{R}^{n+1}$ will be represented as $\{\partial_0,\partial_1,\ldots,\partial_n\}$, where $\partial_0 = (1,0)$. Also, we will limit the Einstein summation convention to the last $n$ coordinates, so that $p = (x,y)$ can be represented as $p = x\partial_0 + y^i\partial_i$. Now, denote by $\delta$ and $\eta$ the euclidean and the Minkowski metrics in $\mathbb{R}^{n+1}$, respectively. Our focus is to deal with the following spaces: \textit{i)} the hemisphere in the round sphere of radius one, characterized by the condition $x> 0$, immersed in $(\mathbb{R}^{n+1},\delta)$; \textit{ii)} the hyperbolic space, in the model of the upper sheet of the hyperboloid, immersed in $(\mathbb{R}^{n+1},\eta)$.  So, let $\epsilon \in \{-1,1\}$. We consider a \textit{space form} the subset $\mathbb{E}^n_\varepsilon \subset \mathbb{R}^{n+1}$ with metric $\overline{g}_\varepsilon$ in $\mathbb{R}^{n+1}$ defined by
\begin{equation}
\mathbb{E}^n_\varepsilon = \{(x,y) \in \mathbb{R}^{n+1} \ ; \ x^2 + \varepsilon||y||^2 = 1 \ \textrm{and} \ x > 0\}
\end{equation}
where $||y||$ represent the euclidean norm of the vector $y \in \mathbb{R}^n$, and
\begin{equation}
\overline{g}_\varepsilon = \varepsilon dx^2 + \sum_{i=1}^n(dy^i)^2.
\end{equation}

For convenience, we will omit the subindex in the metric, leaving it indexed only in the set, and use the notation $\overline{g} = \langle\ ,\,\rangle$ when necessary. The next step is to work with a closed geodesic ball in $\mathbb{E}^n_\varepsilon$. Due to the symmetries of this model, we will choose a distinct point which will help in describing the Steklov problem. There is a clear choice, the point $\partial_0$, which we will sometimes call the \textit{north pole}. We will denoted by $\mathbb{B}^n_\varepsilon(r)$ the closed geodesic ball of radius $r$ and center $\partial_0$ in $\mathbb{E}^n_\varepsilon$. When $\varepsilon = 1$ we will assume $r \in (0,\pi/2)$. To continue towards a single notation, we will index the trigonometric functions with the value $\varepsilon$, like $\sin_\varepsilon$, $\cos_\varepsilon$, etc., so that if $\varepsilon = 1$ we get the usual trigonometric functions and if $\varepsilon = -1$ we get the hyperbolic trigonometric functions. It's immediate that
\begin{equation}
\mathbb{B}^n_\varepsilon(r) = \{(x,y) \in \mathbb{E}^n_\varepsilon \ ; \ ||y|| \leq \sin_\varepsilon r\}
\end{equation}
and
\begin{equation}
\partial\mathbb{B}^n_\varepsilon(r) = \{(x,y) \in \mathbb{E}^n_\varepsilon \ ; \ ||y|| = \sin_\varepsilon r\}.
\end{equation}

Established the basic notation of a space form, we will study how to characterize a free boundary minimal submanifold of $\mathbb{B}^n_\varepsilon(r)$. Consider $\Sigma$ a $k$-dimensional immersed submanifold of the closed geodesic ball and denote by $g$ the Riemannian metric of $\Sigma$. Assume $\partial\Sigma\not=\emptyset$. Let $v \in \mathbb{R}^{n+1}$. The coordinate functions are defined by $\varphi_v : \Sigma\rightarrow \mathbb{R}$ so that
\begin{equation}
\varphi_v(x) = \langle v,x\rangle,
\end{equation} 
and, for convinience, when $v = \partial_i$ is a coordinate vector we prefer to use the simplified notation $\varphi_i$ instead of $\varphi_{\partial_i}$. Next, we say that a minimal immersion $\Sigma\hookrightarrow \mathbb{B}^n_\varepsilon(r)$ is \textit{free boundary} if $\partial\Sigma = \Sigma\cap \partial\mathbb{B}^n_\varepsilon(r)$ and $\nu = N\vert_{\partial\Sigma}$ where $\nu$ is the unity outward vector field along $\partial\Sigma$ and $N$ is the unity outward vector field along $\partial \mathbb{B}^n_\varepsilon(r)$. We have the following characterization established in \cite{Li-Me,Me}:

\begin{prop}\label{coordinatefunctions}
Consider $\Sigma\hookrightarrow \mathbb{B}^n_\varepsilon(r)$ an immersion such that $\partial \Sigma = \Sigma\cap\partial\mathbb{B}^n_\varepsilon(r)$ and let $\nu$ be the unity outward vector field along $\partial \Sigma$. Then $\Sigma$ is an immersed free boundary minimal submanifold of $\mathbb{B}^n_\varepsilon(r)$ if and only if the coordinate functions satisfy
\begin{equation}
\left\{\begin{array}{rl}
\Delta \varphi_i + k\varepsilon \varphi_i = 0, & \textrm{in} \ \Sigma, \ i=0,1,\ldots,n,\\[6pt]
\dfrac{\partial\varphi_0}{\partial\nu} =-{\varepsilon} \tan_\varepsilon r \,\varphi_0, & \textrm{on} \ \partial\Sigma,\\[9pt]
\dfrac{\partial\varphi_i}{\partial\nu} = \cot_\varepsilon r \,\varphi_i, & \textrm{on} \ \partial\Sigma, \ i=1,\ldots,n.
\end{array}\right.
\end{equation}
\end{prop}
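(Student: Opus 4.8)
The plan is to reduce the whole statement to two computations carried out in the flat ambient space $(\mathbb{R}^{n+1},\overline{g}_\varepsilon)$, exploiting that $\overline{g}_\varepsilon$ is a constant (hence flat) metric, so its Levi-Civita connection $\overline{\nabla}$ is ordinary differentiation. The central observation is that the position vector $p$, restricted to $\mathbb{E}^n_\varepsilon$, is normal to $\mathbb{E}^n_\varepsilon$ and satisfies $\overline{g}_\varepsilon(p,p)=\varepsilon$: indeed the defining relation $x^2+\varepsilon\|y\|^2=1$ is exactly $\overline{g}_\varepsilon(p,p)=\varepsilon$, and differentiating along a tangent direction $X$ gives $\overline{g}_\varepsilon(p,X)=0$. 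Moreover $\overline{\nabla}_X p = X$ for every $X$, since $p$ is the position vector. These two facts are all the geometry of the space form I will need.

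First I would establish the interior equations. Writing $\varphi_v=\overline{g}_\varepsilon(v,\cdot)|_\Sigma$ for a fixed $v\in\mathbb{R}^{n+1}$ and choosing a local $g$-orthonormal frame $e_1,\ldots,e_k$ of $T\Sigma$, a direct computation with the flat connection gives $\Delta_\Sigma\varphi_v=\sum_a\overline{g}_\varepsilon(v,\overline{\nabla}_{e_a}e_a-\nabla^\Sigma_{e_a}e_a)=\overline{g}_\varepsilon(v,\vec{H})$, where $\vec{H}=\sum_a(\overline{\nabla}_{e_a}e_a)^{\perp}$ is the trace of the second fundamental form of $\Sigma$ in the flat space. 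The normal bundle of $\Sigma$ in $\mathbb{R}^{n+1}$ splits $\overline{g}_\varepsilon$-orthogonally into the part tangent to $\mathbb{E}^n_\varepsilon$ and the line spanned by $p$; using $\overline{\nabla}_{e_a}p=e_a$ and $\overline{g}_\varepsilon(e_a,p)=0$ one finds $\overline{g}_\varepsilon(\overline{\nabla}_{e_a}e_a,p)=-1$, so the $p$-component of $\vec{H}$ is $-\varepsilon k\,p$ (recall $1/\varepsilon=\varepsilon$). Hence $\vec{H}=\vec{H}^{\mathbb{E}}-\varepsilon k\,p$, where $\vec{H}^{\mathbb{E}}$ is the trace of the second fundamental form of $\Sigma$ inside $\mathbb{E}^n_\varepsilon$, and therefore
$$\Delta_\Sigma\varphi_v+k\varepsilon\,\varphi_v=\overline{g}_\varepsilon(v,\vec{H}^{\mathbb{E}}).$$
Taking $v=\partial_i$ and noting that the $\partial_i$ span $\mathbb{R}^{n+1}$, the system $\Delta\varphi_i+k\varepsilon\varphi_i=0$ for all $i$ is equivalent to $\vec{H}^{\mathbb{E}}\equiv 0$, i.e. to minimality of $\Sigma$.

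Next I would handle the boundary. On $\partial\mathbb{B}^n_\varepsilon(r)=\{\|y\|=\sin_\varepsilon r\}$ the identity $\cos_\varepsilon^2+\varepsilon\sin_\varepsilon^2=1$ forces $x=\cos_\varepsilon r$, so that $\varphi_0=\varepsilon x=\varepsilon\cos_\varepsilon r$ and $\varphi_i=y^i$ there. Parametrizing the radial geodesic from the north pole as $\gamma(t)=(\cos_\varepsilon t,\sin_\varepsilon t\,\omega)$ with $\|\omega\|=1$, I would check it lies in $\mathbb{E}^n_\varepsilon$ and compute its unit velocity $\gamma'(t)=(-\varepsilon\sin_\varepsilon t,\cos_\varepsilon t\,\omega)$; this yields the outward unit normal $N=(-\varepsilon\sin_\varepsilon r,\cos_\varepsilon r\,\omega)$ to $\partial\mathbb{B}^n_\varepsilon(r)$ in $\mathbb{E}^n_\varepsilon$, where $\omega=y/\sin_\varepsilon r$. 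Since $\frac{\partial\varphi_v}{\partial\nu}=\overline{g}_\varepsilon(v,\nu)$ and $\nu\in T\Sigma$, the free boundary condition $\nu=N$ gives $\frac{\partial\varphi_0}{\partial\nu}=\overline{g}_\varepsilon(\partial_0,N)=-\sin_\varepsilon r=-\varepsilon\tan_\varepsilon r\,\varphi_0$ and $\frac{\partial\varphi_i}{\partial\nu}=\overline{g}_\varepsilon(\partial_i,N)=\cos_\varepsilon r\,\omega^i=\cot_\varepsilon r\,\varphi_i$, which are the asserted boundary conditions.

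For the converse I would run the same identities backwards: the interior system gives $\vec{H}^{\mathbb{E}}=0$ as above, while the two boundary conditions prescribe $\overline{g}_\varepsilon(\partial_j,\nu)$ for every $j=0,\ldots,n$ and hence determine $\nu$ completely; matching these values against $N$ shows $\nu_0=-\varepsilon\sin_\varepsilon r$ and $\nu^i=\cos_\varepsilon r\,\omega^i$, i.e. $\nu=N$, which is exactly the orthogonality required in the free boundary condition (the other requirement $\partial\Sigma=\Sigma\cap\partial\mathbb{B}^n_\varepsilon(r)$ being already part of the hypotheses). The main bookkeeping obstacle I anticipate is keeping the $\varepsilon$-signs consistent across the Euclidean and Minkowski cases---in particular the facts $\overline{g}_\varepsilon(p,p)=\varepsilon$, $\overline{g}_\varepsilon(\partial_0,\cdot)=\varepsilon(\cdot)_0$, and $1/\varepsilon=\varepsilon$---together with correctly isolating the $p$-component of the ambient mean-curvature vector; once these are pinned down, both directions follow at once from the two displayed computations.
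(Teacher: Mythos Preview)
The paper does not actually supply a proof of this proposition: it states the result with the attribution ``We have the following characterization established in \cite{Li-Me,Me}'' and moves on. Your argument is the standard computation that underlies those references --- the Takahashi-type identity $\Delta_\Sigma\varphi_v+k\varepsilon\varphi_v=\overline g_\varepsilon(v,\vec H^{\mathbb E})$ obtained by splitting the ambient mean-curvature vector along the position field $p$, together with the explicit form $N=(-\varepsilon\sin_\varepsilon r,\cos_\varepsilon r\,\omega)$ of the outward normal along $\partial\mathbb B^n_\varepsilon(r)$ --- and it is carried out correctly, including the $\varepsilon$-bookkeeping and the converse direction recovering $\nu=N$ from the boundary relations. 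There is nothing to compare against in the paper itself; your write-up would serve as a self-contained proof of the cited statement.
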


When $\Sigma$ is compact with non empty boundary, a hypothesis that we will assume from now on, the Proposition \ref{coordinatefunctions} establishes that the coordinate functions are eingenfunctions of a Steklov problem with frequency $k\varepsilon$. More precisely, we will deal with the problem
\begin{equation}\label{Steklovproblemspaceform}
\left\{\begin{array}{ll}
\Delta u + \varepsilon k u = 0, & \textrm{in} \ \Sigma, \\[6pt]
\dfrac{\partial u}{\partial \nu} - \sigma u = 0, & \textrm{on} \ \partial\Sigma
\end{array}\right.
\end{equation}
where $u \in C^\infty(\Sigma)$. In order to apply the theory of Sections \ref{Steklovproblem} and \ref{nodalsteklov2} we need additional information about the frequency. 

\begin{prop}\label{minimumvalue}
Consider $(\Sigma^k,g)$ a compact Riemannian manifold and assume $\Sigma\hookrightarrow \mathbb{B}^n_\varepsilon(r)$ is a free boundary minimal immersion. Then, the Dirichlet problem
\begin{equation}\label{problemadedirichlet}
\left\{\begin{array}{ll}
\Delta u + \varepsilon ku = \lambda u, & \textrm{in} \ \ \Sigma,\\[4pt]
u = 0, & \textrm{on} \ \ \partial\Sigma
\end{array}\right.
\end{equation}
has no solution for any $\lambda \geq 0$. 
\end{prop}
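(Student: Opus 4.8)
The plan is to translate the nonexistence statement into a strict lower bound on the first Dirichlet eigenvalue $\lambda_1^D$ of $-\Delta$. A nonzero solution $u$ of \eqref{problemadedirichlet} satisfies $-\Delta u = (\varepsilon k - \lambda)u$ with $u|_{\partial\Sigma}=0$, so it is a Dirichlet eigenfunction of $-\Delta$ with eigenvalue $\mu = \varepsilon k - \lambda$. Since every Dirichlet eigenvalue is at least $\lambda_1^D > 0$, the existence of such a $u$ with $\lambda\ge0$ would force $\lambda_1^D \leq \varepsilon k - \lambda \leq \varepsilon k$. Hence it suffices to prove the strict inequality $\lambda_1^D > \varepsilon k$, which is precisely the supercriticality condition \eqref{hipminimumvalue} for the frequency $\alpha = \varepsilon k$.

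To obtain this bound I would exploit the positive coordinate function $\varphi_0$. In both geometries $\varphi_0 = x > 0$ on all of $\overline{\Sigma}$: for $\varepsilon = 1$ one has $x = \sqrt{1-\|y\|^2}\ge\cos r>0$, using the hypothesis $r<\pi/2$, while for $\varepsilon=-1$ one has $x\ge1$. By Proposition \ref{coordinatefunctions}, $\Delta\varphi_0 + \varepsilon k\,\varphi_0 = 0$. Let $u_1$ be a first Dirichlet eigenfunction, so that $\Delta u_1 + \lambda_1^D u_1 = 0$, with $u_1=0$ on $\partial\Sigma$ and $u_1>0$ in $\mathrm{int}\,\Sigma$ by the strong maximum principle. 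Applying Green's second identity to the pair $(\varphi_0,u_1)$ and using $u_1|_{\partial\Sigma}=0$ gives
\[ (\varepsilon k - \lambda_1^D)\int_\Sigma \varphi_0 u_1 = \int_{\partial\Sigma}\varphi_0\,\frac{\partial u_1}{\partial\nu}. \]

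The conclusion then follows from a sign analysis, and this is where the only real difficulty lies. The left-hand integrand $\varphi_0 u_1$ is positive in the interior, so $\int_\Sigma\varphi_0 u_1>0$. For the right-hand side the crucial point is strictness: since $u_1>0$ inside and vanishes on the smooth boundary, the Hopf boundary point lemma yields $\frac{\partial u_1}{\partial\nu}<0$ along $\partial\Sigma$, and $\varphi_0>0$ there, so the boundary integral is strictly negative. Therefore $\varepsilon k - \lambda_1^D<0$. I expect the main obstacle to be exactly this strict inequality: a weak version $\lambda_1^D\ge\varepsilon k$ already follows from Barta's inequality applied to the positive solution $\varphi_0$, but that is insufficient, since the borderline case $\lambda_1^D=\varepsilon k$ would produce a solution of \eqref{problemadedirichlet} at $\lambda=0$; the Hopf lemma (equivalently, unique continuation of the Cauchy data of $u_1$) is what upgrades the estimate to a strict one. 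Once $\lambda_1^D>\varepsilon k$ is established, no Dirichlet eigenvalue can lie in $(0,\varepsilon k]$, so \eqref{problemadedirichlet} admits no nonzero solution for any $\lambda\ge0$.
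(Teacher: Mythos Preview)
Your argument is correct. The initial reduction---rewriting \eqref{problemadedirichlet} as the Dirichlet eigenvalue problem $-\Delta u = (\varepsilon k-\lambda)u$ and observing that the claim is equivalent to the supercriticality bound $\lambda_1^D>\varepsilon k$---is exactly what the paper does. After that the two proofs diverge.

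The paper splits into cases. For $\varepsilon=-1$ it simply notes that every Dirichlet eigenvalue is positive, so $\varepsilon k-\lambda=-k-\lambda>0$ is impossible for $\lambda\ge0$; no further work is needed. For $\varepsilon=1$ the paper does not argue directly at all: it cites \cite{Li-Me}. Your proof instead treats both cases uniformly by pairing the first Dirichlet eigenfunction $u_1$ with the positive coordinate function $\varphi_0$ via Green's identity, then invoking the Hopf boundary lemma to force the strict sign. This is exactly the right mechanism for the spherical case (and is presumably what lies behind the cited result in \cite{Li-Me}); in the hyperbolic case it works too but is more than is needed, since there $\varepsilon k<0<\lambda_1^D$ is automatic. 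The net effect is that your write-up is self-contained where the paper's is not, at the cost of not isolating the triviality of the hyperbolic case.
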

\begin{proof}[\bf Proof]
A non-trivial solution of \eqref{problemadedirichlet} means that the Laplace problem with Dirichlet condition, $-\Delta u = (\varepsilon k-\lambda)u$ e $u\vert_{\partial\Sigma} = 0$, admits a non-trivial solution. In this case, $\varepsilon k-\lambda$ is an eigenvalue. As such eigenvalues are positive, it's immediate that $\lambda < 0$ when $\varepsilon=-1$. The case $\epsilon = 1$ was proved in \cite{Li-Me}. 
\end{proof}

\begin{remark}
Since $\varphi_0$ in a positive eigenfunction we have that $\sigma_0 = -\tan_\varepsilon r$.
\end{remark}
 
In the remain of the section we will state some results necessary to demonstrate the Main Theorem. In what follows, we will consider
\begin{equation}
\mathcal{C} = \langle\{\partial_1,\ldots,\partial_n\}\rangle
\end{equation}
i.e. $\mathcal{C}$ is the vector space genereted by the vectors $\partial_i$, $i=1,\ldots,n$.

\subsection{The two-piece property}

A key tool in \cite{Ku-McG} is the two-piece property established by Lima and Menezes \cite{Li-Me2}. Recently, Naff and Zhu \cite{Na-Zh} proved the following result:

\begin{teo}[Naff-Zhu]
If $\Sigma_0,\Sigma_1 \subset \mathbb{B}^n_\varepsilon(r)$ are free boundary embedded minimal hypersurfaces in a closed geodesic ball of radius $r \in (0,\mathrm{diam}(\mathbb{E}^n_\varepsilon))$ then 
$$\mathbb{B}^n_{\varepsilon,+}(r)\cap \Sigma_0\cap\Sigma_1 \not=\emptyset,$$
where $\mathbb{B}^3_{\varepsilon,+}(r)\subset \mathbb{B}^3_\varepsilon(r)$ is a closed half-ball.
\end{teo}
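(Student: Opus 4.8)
The plan is to argue by contradiction with a Frankel-type comparison, treating the two sign cases $\varepsilon=\pm1$ separately. Suppose the relevant intersection is empty. For the admissible range of $r$ the closed geodesic ball $\mathbb{B}^n_\varepsilon(r)$ is geodesically convex, so the ambient distance between the two closed, disjoint, embedded hypersurfaces $\Sigma_0$ and $\Sigma_1$ is attained by a minimizing geodesic $\gamma:[0,L]\to\mathbb{B}^n_\varepsilon(r)$ whose interior stays in the open ball. First I would record the first-order consequences: at an endpoint lying in the interior of $\Sigma_i$ the geodesic meets $\Sigma_i$ orthogonally, whereas an endpoint on $\partial\Sigma_i\subset\partial\mathbb{B}^n_\varepsilon(r)$ is constrained by the free boundary (orthogonality) condition together with the convexity of $\partial\mathbb{B}^n_\varepsilon(r)$.

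The core computation is the index form of length along $\gamma$. I would parallel-transport an orthonormal basis of $T_{\gamma(0)}\Sigma_0$ along $\gamma$ to obtain parallel fields $V_1,\dots,V_{n-1}$; since $\gamma$ is orthogonal to both surfaces, the $V_j(L)$ span $T_{\gamma(L)}\Sigma_1$ (both are the orthogonal complement of the parallel vector $\gamma'$). Summing the index forms and using that the fields are parallel gives
\[
\sum_{j=1}^{n-1} I(V_j,V_j)=-\int_0^L \ric(\gamma',\gamma')\,dt-\tr \mathrm{II}_{\Sigma_1}\big|_{t=L}-\tr\mathrm{II}_{\Sigma_0}\big|_{t=0}.
\]
Minimality makes both trace terms vanish, and in the space form $\ric(\gamma',\gamma')=(n-1)\varepsilon$, so the right-hand side equals $-(n-1)\varepsilon L$. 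When $\varepsilon=1$ this is strictly negative, forcing some $I(V_j,V_j)<0$ and contradicting minimality of $\gamma$; this settles the hemisphere case cleanly.

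The hard part will be the hyperbolic case $\varepsilon=-1$, where the Ricci term has the opposite sign and the interior computation yields $+(n-1)L>0$, giving no contradiction — indeed Frankel's property genuinely fails for complete minimal hypersurfaces in $\mathbb{H}^n$ (e.g.\ ultraparallel totally geodesic hyperplanes), so the convex boundary must enter in an essential way. Here I would exploit that the geodesic sphere $\partial\mathbb{B}^n_{-1}(r)$ is strictly convex, with principal curvatures $\coth r>0$, and that the free boundary condition forces $\Sigma_i$ to meet it orthogonally. The plan is either (i) to run the comparison with a distance minimizer whose endpoints are pushed onto $\partial\mathbb{B}^n_{-1}(r)$, so that the index form acquires boundary terms governed by the second fundamental form of $\partial\mathbb{B}^n_{-1}(r)$, whose strict positivity now dominates the wrong-signed Ricci contribution; or (ii) to replace $\Sigma_0$ by its family of equidistant hypersurfaces and apply the maximum principle via the Riccati equation $\kappa_i'=1-\kappa_i^2$, using embeddedness to ensure that first contact with $\Sigma_1$ occurs in the interior. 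Controlling the endpoint behaviour (interior versus boundary contact) and extracting a \emph{strict} sign from the boundary convexity is the principal obstacle, and is precisely where embeddedness and the free boundary condition become indispensable.

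Finally, for the half-ball refinement I would use the reflection $\tau$ across the totally geodesic hyperplane $P$ through the north pole that bounds $\mathbb{B}^n_{\varepsilon,+}(r)$: once a point of $\Sigma_0\cap\Sigma_1$ has been produced, choosing $P$ through that point (every point of the ball lies in some closed half-ball through the center) places an intersection point in $\mathbb{B}^n_{\varepsilon,+}(r)$, while the isometry $\tau$ lets one transfer the conclusion between the two half-balls.
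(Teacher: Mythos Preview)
The paper does not prove this theorem: it is quoted from Naff and Zhu \cite{Na-Zh} as an external input, and only its corollary (the two-piece property, Corollary~\ref{twopieceproperty}) is used thereafter. So there is no in-paper argument to compare your proposal against.

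Independently of that, your sketch has two genuine gaps. First, the statement says $\Sigma_0\cap\Sigma_1$ meets \emph{every} prescribed closed half-ball $\mathbb{B}^n_{\varepsilon,+}(r)$, not merely that $\Sigma_0\cap\Sigma_1\neq\emptyset$. Your Frankel computation, where it works, gives only the latter; the upgrade in your final paragraph fails because the hyperplane $P$ is \emph{given}, not at your disposal --- you cannot pass $P$ through an intersection point after producing one, and the reflection $\tau$ does not transfer the conclusion between half-balls unless both $\Sigma_i$ are $\tau$-invariant, which is nowhere assumed. Second, in the hyperbolic case you explicitly leave the argument as a plan: you correctly identify that Ricci has the wrong sign and that the strict convexity of $\partial\mathbb{B}^n_{-1}(r)$ together with the free boundary condition must carry the weight, but neither option (i) nor (ii) is executed, and the endpoint dichotomy (interior versus boundary contact of the minimizing geodesic) that you flag as ``the principal obstacle'' is precisely where a proof would have to begin, not end. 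A direct second-variation-of-length argument of the type you set up does not by itself deliver the half-space conclusion in either curvature; the Naff--Zhu result requires substantially more than a Frankel contradiction.
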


For our purposes, we are interested in the following consequence:

\begin{cor}[Two-piece property for $\mathbb{B}^n_\varepsilon(r)$]\label{twopieceproperty}
Let $\Sigma$ be a compact embedded free boundary minimal hypersurface in $\mathbb{B}^n_\varepsilon(r)$. If $v \in \mathcal{C}$ then $$\Sigma\cap H_+(v) \ \ \ \ \ \ \ \textrm{and} \ \ \ \ \ \ \ \Sigma\cap H_-(v)$$ are connect sets, where $H_+(v) = \{w \in \mathbb{R}^{n+1} \ ; \ \overline{g}(v,w) > 0\}$ and $H_-(v) = \{w \in \mathbb{R}^{n+1} \ ; \ \overline{g}(v,w) < 0\}.$ 
\end{cor}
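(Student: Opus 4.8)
The plan is to realize $\Sigma\cap H_\pm(v)$ as the nodal domains of a Steklov eigenfunction and then to rule out more than two of them by a reflection argument powered by the Naff--Zhu intersection theorem. First I would record the geometry of the dividing hypersurface. Since $v\in\mathcal{C}$ we have $\overline{g}(v,\partial_0)=0$, so $P_v:=\{w\in\mathbb{E}^n_\varepsilon \ ; \ \overline{g}(v,w)=0\}$ is a totally geodesic hypersurface through the centre $\partial_0$ of the ball; consequently $D_v:=P_v\cap\mathbb{B}^n_\varepsilon(r)$ is an embedded equatorial disk which is itself a free boundary minimal hypersurface, and the geodesic reflection $R_v$ across $P_v$ is an isometry of $\mathbb{B}^n_\varepsilon(r)$ that fixes $D_v$ pointwise and interchanges $H_+(v)$ and $H_-(v)$. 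Next I would reinterpret the statement: by Proposition \ref{coordinatefunctions} the restriction $\varphi_v|_\Sigma$ satisfies $\Delta\varphi_v+(n-1)\varepsilon\,\varphi_v=0$ together with $\partial_\nu\varphi_v=\cot_\varepsilon r\,\varphi_v$, so it is a Steklov eigenfunction, and $\Sigma\cap H_\pm(v)=\{\pm\varphi_v>0\}$ are exactly its positive and negative nodal domains. Thus the corollary is equivalent to the assertion that $\varphi_v$ has exactly two nodal domains. Applying the Naff--Zhu theorem to the pair $(\Sigma,D_v)$ gives $\Sigma\cap P_v\neq\emptyset$, which already shows that each of $\Sigma\cap H_+(v)$ and $\Sigma\cap H_-(v)$ is nonempty; the whole content is therefore the connectedness of each piece.

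For connectedness I would argue by contradiction, following the Ros/Kusner--McGrath reflection scheme. Suppose $\Sigma\cap H_+(v)$ is disconnected and pick two distinct components $\Omega_1,\Omega_2$. The idea is to manufacture from them two \emph{disjoint} free boundary minimal hypersurfaces, which is forbidden by the Naff--Zhu theorem. The natural candidates are the doubles $\widehat{\Omega}_i:=\overline{\Omega}_i\cup R_v(\overline{\Omega}_i)$, $i=1,2$: each is symmetric under $R_v$, inherits orthogonality with $\partial\mathbb{B}^n_\varepsilon(r)$ from $\Sigma$, and $\widehat{\Omega}_1,\widehat{\Omega}_2$ are disjoint away from $P_v$ because $\Omega_1,\Omega_2$ are disjoint and lie in $H_+(v)$ while their reflections lie in $H_-(v)$. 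If $\Sigma$ is $R_v$-invariant then $\Sigma$ meets $P_v$ orthogonally (a transverse invariant sheet forces the normal of $P_v$ to be tangent to $\Sigma$), the two doubles are genuine smooth embedded free boundary minimal hypersurfaces, and two disjoint such hypersurfaces contradict the Naff--Zhu theorem; this disposes of the symmetric case, with any possible touching of $\overline{\Omega}_1$ and $\overline{\Omega}_2$ at crossing points of the nodal set on $P_v$ handled by the strong maximum principle.

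The hard part will be the general case, where $\Sigma$ need not be invariant under $R_v$ and hence need not meet $P_v$ orthogonally: then each double $\widehat{\Omega}_i$ carries a nonsmooth ridge along $\overline{\Omega}_i\cap P_v$ and is not a minimal hypersurface, so the intersection theorem cannot be applied to it directly. To get around this I would not double, but instead apply the Naff--Zhu theorem to the honest pair $(\Sigma,R_v(\Sigma))$ of free boundary minimal hypersurfaces, using crucially that the theorem localizes the intersection inside the closed half-ball $\overline{H_+(v)}$. Running a continuity/first-contact argument --- sliding $R_v(\Sigma)$ back toward $\Sigma$ and invoking the strong maximum principle together with unique continuation at an interior point of tangency --- should force $R_v(\Sigma)=\Sigma$, reducing to the orthogonal case already treated and thereby producing the contradiction. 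Making this maximum-principle step rigorous, in particular controlling the behaviour at the free boundary $\partial\mathbb{B}^n_\varepsilon(r)$ and along $P_v$, is the main obstacle of the proof.
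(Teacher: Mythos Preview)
The paper gives no proof of this corollary at all: it is simply stated as a consequence of the Naff--Zhu half-space intersection theorem, the derivation being left to \cite{Na-Zh}. So there is no ``paper's own proof'' to compare against; what you have written is an attempt to supply the missing derivation.

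Your argument in the $R_v$-symmetric case is essentially sound, but the general case contains a genuine gap, and one that you cannot close along the lines you suggest. You propose to apply Naff--Zhu to the pair $(\Sigma,R_v(\Sigma))$, obtain an intersection point in $\overline{H_+(v)}$, and then run a ``sliding / first-contact / strong maximum principle'' argument to force $R_v(\Sigma)=\Sigma$, thereby reducing to the symmetric case. But the conclusion $R_v(\Sigma)=\Sigma$ is simply false for a generic $\Sigma$: any embedded free boundary minimal hypersurface that is not invariant under \emph{all} equatorial reflections (and such examples abound --- already the critical rotational annuli fail to be symmetric across most equatorial disks) gives a counterexample. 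The half-space Frankel property only guarantees that $\Sigma$ and $R_v(\Sigma)$ meet somewhere in $\overline{H_+(v)}$; it does not hand you a one-sided tangential contact, so the strong maximum principle has nothing to bite on --- the two surfaces may perfectly well cross transversally. Nor is there any canonical one-parameter family of free boundary minimal hypersurfaces along which to ``slide $R_v(\Sigma)$ back toward $\Sigma$'': the Alexandrov moving-planes picture you are alluding to has no analogue here.

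The upshot is that the doubling construction $\widehat{\Omega}_i$ cannot be made smooth in general, and your proposed workaround does not work. The actual derivation of the two-piece property from the half-space Frankel property in \cite{Na-Zh} (and earlier, for $\mathbb{S}^n$ and the Euclidean ball, in Ros and in \cite{Li-Me2}) proceeds differently and does not attempt to reduce to a symmetric situation; it exploits instead that the embedded hypersurface $\Sigma$ separates the ball into two components, and analyses how the components of the complement sit inside each half-ball. You should consult those references for the correct mechanism rather than trying to repair the reflection approach.
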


\subsection{Critical rotational annuli}\label{families}

We will now describe two families of free boundary minimal annuli in $\mathbb{B}^3_\varepsilon(r)$, one for each value of $\varepsilon$. The construction of these families goes back to the work of Otsuki \cite{Ot} and Mori \cite{Mo}, who constructed a family of rotational minimal surfaces in $\mathbb{S}^3$ and $\mathbb{H}^3$. In \cite{Ca-Da}, Carmo and Dajczer generalized the results of Otsuki and Mori, where they constructed a family of hypersurfaces in spaces of constant curvature, which we will now describe. First, consider $f_\varepsilon,g_\varepsilon : \mathbb{R} \rightarrow \mathbb{R}$ functions defined as $f_\varepsilon(s) = \sqrt{1 -\varepsilon a \cos_\varepsilon(2s)}$ and $g_\varepsilon(s) = \sqrt{\varepsilon + a\cos_\varepsilon(2s)}$ where $a \in (-1,0]$, if $\varepsilon = 1$, and $a \in (1,\infty)$, if $\varepsilon = -1$. Now, consider $\psi_\varepsilon : \mathbb{R}\rightarrow \mathbb{R}$ defined as
$$\psi_\varepsilon(s) = \sqrt{\varepsilon\left(1-a^2\right)}{\displaystyle\int_0^s}\dfrac{1}{2\,f_\varepsilon(s)^2g_\varepsilon(s)}dt.$$

Finally, we define $\Phi_{a,\varepsilon} : \mathbb{R}\times \mathbb{S}^1\rightarrow \mathbb{E}^3_\varepsilon$ given by
$$\Phi_{a,\varepsilon}(s,\theta) = \left(\dfrac{}{}f_\varepsilon(s)\cos_\varepsilon\psi_\varepsilon(s),f_\varepsilon(s)\sin_\varepsilon\psi_\varepsilon(s),g_\varepsilon(s)\cos\theta,g_\varepsilon(s)\sin\theta\right).$$

As observed by Li and Xiong \cite{Li-Xi}, there exists a value of $a$ and a positive number $s_0 \in \mathbb{R}$ such that
\begin{equation}
\Phi_{a,\varepsilon} : [-s_0,s_0]\times\mathbb{S}^1\rightarrow \mathbb{B}^3_\varepsilon(r)
\end{equation}
is a free boundary minimal annulus in $\mathbb{B}^3_\varepsilon(r)$. It is important to note that when $\varepsilon = 1$ we have a limitation on the radius of the geodesic ball: $r \in (0,\frac{\pi}{2})$. For what follows, we will call an element in this family a \textit{critical rotational annulus}. In \cite[Theorem C]{Li-Me} Lima and Menezes presented a classification result for free boundary minimal annulus in a geodesic ball $\mathbb{B}^n_\varepsilon(r)$ in $\mathbb{S}^3_+$, with the limitation of the radius of the geodesic ball $r \in (0,\frac{\pi}{2})$, using the hypothesis that the coordinate functions are first eigenfunctions of the Steklov problem. Following their work, Medveded \cite[Theorem 5.10]{Me} obtained, using the same hypothesis about $\sigma_1$, a similar result but for free boundary minimal annulus in a geodesic ball $\mathbb{B}^n_\varepsilon(r)$ in $\mathbb{H}^3$. These two results can be combined into the following theorem:

\begin{teo}[Lima-Menezes \cite{Li-Me}/Medvedev \cite{Me}]\label{LMM}
Let $\Sigma$ be an annulus and consider $\Phi = (\Phi_0,\ldots,\Phi_n) : (\Sigma,g)\rightarrow \mathbb{B}^n_{\varepsilon}$ a free boundary minimal immersion. If $\Phi_i$ is a $\sigma_1$-eigenfunction, for $i=1,\ldots,n$, then $n = 3$ and $\Phi(\Sigma)$ is a critical rotational annulus.
\end{teo}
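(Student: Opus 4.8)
The plan is to generalize the Fraser--Schoen treatment of the critical catenoid \cite{Fr-Sc} to nonzero frequency $2\varepsilon$, using the nodal and multiplicity machinery developed in Sections \ref{Steklovproblem}--\ref{nodalsteklov2}. First I would pin down the dimension. Since $\Phi$ is free boundary minimal, Proposition \ref{coordinatefunctions} gives $\Delta\Phi_i + 2\varepsilon\Phi_i = 0$ with $\partial\Phi_i/\partial\nu = \cot_\varepsilon r\,\Phi_i$ on $\partial\Sigma$, so each $\Phi_i$ ($i=1,\dots,n$) is a Steklov eigenfunction with frequency $2\varepsilon$ and eigenvalue $\cot_\varepsilon r$, which by hypothesis is $\sigma_1$. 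Assuming the immersion substantial (otherwise pass to a totally geodesic subspace), the $\Phi_1,\dots,\Phi_n$ are linearly independent and all lie in $\mathcal{E}_1$. As $\Sigma$ is an annulus ($\gamma=0$), Theorem \ref{multiplicidadesigmai} yields $\dim\mathcal{E}_1 \le 4\gamma + 2\cdot 1 + 1 = 3$, so $n\le 3$; since a substantial minimal surface cannot lie inside a $2$-dimensional space form, $n\ge 3$, whence $n=3$ and $\{\Phi_1,\Phi_2,\Phi_3\}$ is a basis of $\mathcal{E}_1$.

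Next I would uniformize: a minimal annulus is conformally a flat cylinder, so I fix conformal coordinates $(s,\theta)\in[-T,T]\times\mathbb{S}^1$ with $g=\lambda^2(ds^2+d\theta^2)$ and $z=s+i\theta$. Two facts are then available. Because $\Sigma$ is minimal of codimension one in $\mathbb{E}^3_\varepsilon$, its Hopf differential $\langle\Phi_{zz},N\rangle\,dz^2$ is holomorphic; $2\pi$-periodicity gives $\langle\Phi_{zz},N\rangle=\sum_{k\in\mathbb{Z}}c_k e^{kz}$, and the free boundary condition (the geodesic sphere $\partial\mathbb{B}^3_\varepsilon(r)$ is umbilic and $\Sigma$ meets it orthogonally, which forces $\mathrm{II}(\partial_s,\partial_\theta)=0$ on $\partial\Sigma$) makes this differential real along $s=\pm T$; reality at both ends kills every mode with $k\ne 0$, so $\langle\Phi_{zz},N\rangle$ is a real constant. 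On the other hand, the eigenvalue equation becomes $\Phi_{ss}+\Phi_{\theta\theta}+2\varepsilon\lambda^2\Phi=0$, which, crucially, does \emph{not} separate in $(s,\theta)$ once $\varepsilon\ne 0$.

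The heart of the argument is to extract rotational symmetry, i.e.\ that $\partial_\theta$ is a Killing field of $\Sigma$ (equivalently $\partial_\theta\lambda=0$), after which the profile solves the Otsuki--Mori--Carmo--Dajczer ODE and the annulus topology together with the free boundary condition singles out a critical rotational annulus \cite{Ca-Da,Ot,Mo}. Here the first-eigenfunction hypothesis must be used in an essential way: constancy of the Hopf differential holds for \emph{every} free boundary minimal annulus (Cerezo's non-rotational examples \cite{Ce} included), so it alone cannot yield symmetry. In the flat model of \cite{Fr-Sc} the coordinate functions are honestly harmonic on the cylinder, variables separate, and the minimality of $\sigma_1$ restricts the $\theta$-Fourier expansion to the lowest mode, producing the rotational profile. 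In the space-form setting I would instead feed the first-eigenfunction property through the nodal structure: by Lemma \ref{doisdominioseme1} each nonzero element of $\mathcal{E}_1$ has exactly two nodal domains, and the nodal-set classification of Section \ref{nodalsteklov2} restricts $\mathcal{N}_{\Phi_i}$ to a single circle or to arcs joining the two boundary circles; imposing this simultaneously on a three-dimensional $\mathcal{E}_1$ spanned by the $\Phi_i$ forces the $\theta$-dependence down to the first harmonic, hence the Killing symmetry.

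I expect this last step — converting the first-eigenfunction hypothesis into rotational symmetry without separation of variables — to be the main obstacle, and it is precisely the technical core supplied by \cite{Li-Me} and \cite{Me}. The subsidiary points requiring care are the boundary computation giving reality of the Hopf differential, and, at the end, matching the resulting rotational profile to the admissible parameter range ($a\in(-1,0]$ with $r\in(0,\pi/2)$ when $\varepsilon=1$, and $a\in(1,\infty)$ when $\varepsilon=-1$) so that the closed-up surface is exactly a critical rotational annulus.
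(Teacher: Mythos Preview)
The paper does not supply its own proof of Theorem \ref{LMM}: the result is quoted from \cite{Li-Me} (for $\varepsilon=1$) and \cite{Me} (for $\varepsilon=-1$) and invoked as a black box in the proof of the Main Theorem. So there is no in-paper argument against which to compare your sketch.

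Your outline correctly handles the dimension reduction: the multiplicity bound of Theorem \ref{multiplicidadesigmai} together with linear independence of the $\Phi_i$ for a full immersion gives $n\le 3$, and $n\ge 3$ since a $2$-dimensional geodesic ball contains no minimal annulus. The Hopf-differential setup, and the observation that its constancy holds for \emph{every} free boundary minimal annulus (hence cannot by itself give rigidity, witness \cite{Ce}), are also correct.

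The genuine gap is the one you already flag: extracting rotational symmetry from the $\sigma_1$-hypothesis. Your proposed mechanism --- using the nodal classification of Section \ref{nodalsteklov2} to ``force the $\theta$-dependence down to the first harmonic'' --- is not a proof, and it is not clear it can be made into one. Nodal topology constrains sign patterns and the combinatorics of connected components, not Fourier content; there is no evident step by which the three admissible nodal pictures for elements of $\mathcal{E}_1$ force $\partial_\theta\lambda=0$. The arguments in \cite{Li-Me} and \cite{Me} treat this step by a direct analysis of $\sigma_1$-eigenfunctions on the conformal cylinder, which is closer in spirit to the Fraser--Schoen separation argument you set aside than to a purely nodal one (the frequency term $2\varepsilon\lambda^2$ is what makes it delicate). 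As a proof of Theorem \ref{LMM}, then, your proposal is incomplete exactly at its acknowledged core, and the route you suggest for closing that gap does not obviously work.
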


\section{Proof of the Main Theorem.}

In what follows, $n = 3$. Let $\Sigma$ be an embedded surface in the geodesic ball $\mathbb{B}^3_\varepsilon(r)$, where we consider $r \in (0,\pi/2)$ if $\varepsilon = 1$. We define the \textit{antipodal map} in $\mathbb{B}^3_\varepsilon(r)$ as the involutive isometry $A : \mathbb{B}^3_\varepsilon(r) \rightarrow \mathbb{B}^3_\varepsilon(r)$, $A(x,y) = (x,-y)$. As before, consider $\mathcal{C}$ the vector space generated by the set $\{\partial_1,\partial_2,\partial_3\}$.

\begin{lem}\label{CisAodd}
If $\Sigma$ is a surface in $\mathbb{B}^3_\varepsilon(r)$ invariant by the antipodal map then $\varphi_v$ is $A$-odd for all $v \in \mathcal{C}$.
\end{lem}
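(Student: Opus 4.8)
The plan is to reduce the statement to the explicit form of the ambient metric together with the linearity of the coordinate functions. First I would note that the hypothesis that $\Sigma$ is invariant by the antipodal map guarantees that $A$ restricts to a map $A : \Sigma \to \Sigma$, so that $\varphi_v \circ A$ is a well-defined function on $\Sigma$ and the notion of being $A$-odd makes sense; in fact the identity I will establish holds on all of $\mathbb{R}^4$, so the invariance hypothesis is used only to make the restriction to $\Sigma$ meaningful.

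The core computation uses the splitting $\mathbb{R}^4 = \mathbb{R}\partial_0 \oplus \mathcal{C}$. Since $\mathcal{C} = \langle\{\partial_1, \partial_2, \partial_3\}\rangle$, any $v \in \mathcal{C}$ is of the form $v = (0, w)$ with $w \in \mathbb{R}^3$; in particular its $\partial_0$-component vanishes. For a point $p = (x, y) \in \Sigma$ the coordinate function is $\varphi_v(p) = \overline{g}_\varepsilon(v, p)$, and evaluating the metric $\overline{g}_\varepsilon = \varepsilon\, dx^2 + \sum_{i=1}^3 (dy^i)^2$ on $v = (0,w)$ and $p = (x,y)$ annihilates the $\varepsilon\, dx^2$ term, leaving
$$\varphi_v(x, y) = \langle w, y\rangle,$$
where $\langle\,,\rangle$ denotes the Euclidean inner product on the last three coordinates.

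Finally I would apply $A$. Because $A(x, y) = (x, -y)$ reverses precisely the $y$-coordinates on which $\varphi_v$ depends, the linearity of $\langle w, \cdot\,\rangle$ gives
$$\varphi_v(A(x, y)) = \langle w, -y\rangle = -\langle w, y\rangle = -\varphi_v(x, y),$$
so $\varphi_v \circ A = -\varphi_v$ on $\Sigma$, which is exactly the assertion that $\varphi_v$ is $A$-odd. There is essentially no obstacle in this argument: the only point that requires attention is confirming that the $\partial_0$-component of $v$ vanishes for $v \in \mathcal{C}$, so that no $\varepsilon$-dependent contribution survives and the conclusion holds uniformly for both $\varepsilon = 1$ and $\varepsilon = -1$.
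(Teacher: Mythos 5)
Your proof is correct and is precisely the direct computation the paper leaves implicit (the lemma is stated without proof there): since $v\in\mathcal{C}$ has vanishing $\partial_0$-component, $\varphi_v(x,y)=\overline{g}(v,(x,y))$ depends only on $y$, and $A(x,y)=(x,-y)$ together with linearity gives $\varphi_v\circ A=-\varphi_v$. Your remark that the invariance of $\Sigma$ is needed only so that $A$ restricts to a map of $\Sigma$ is also the right observation.
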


Returning to the Theorem \ref{generale1c}, when we consider an embedded free boundary minimal annulus in $\mathbb{B}^3_\varepsilon(r)$ it is necessary that we have two conditions about the antipodal map to apply this result. First, the restriction of $A$ to the annulus cannot have fixed points and, second, the antipodal map needs to interchange the orientation of the annulus. We will now prove that these statements remain true for the Steklov problem with frequency. This was the strategy adopted by Fraser and Schoen \cite{Fr-Sc} and later, using the two-piece property in the unit ball \cite{Li-Me2}, by Kusner and McGrath \cite{Ku-McG}, which we will follow to prove the Main Theorem.

\begin{proof}[\bf Proof of the Main Theorem]
Since the annulus $\Sigma$ is invariant by the antipodal map in $\mathbb{B}^3_\varepsilon(r)$ we can consider the map $A_\Sigma : \Sigma\rightarrow \Sigma$, the restriction of the antipodal map to the annulus $\Sigma$. 

Step 1: the two-piece property (Corollary \ref{twopieceproperty}) guarantees that for all $v \in \mathcal{C}$ the eigenfunction $\varphi_v$ has exactly two nodal domains. We will use this fact to show that $\partial_0 \not \in \Sigma$. Assume $\partial_0 \in \mathrm{int}\,\Sigma$. For any $v \in \mathcal{C}$ we have that $\varphi_v(\partial_0) = 0$. Since $\Sigma$ has genus zero and $\varphi_v$ has exactly two nodal domains, the Proposition \ref{multiplicidadesigma1prop4} guarantees that the order of vanishing of $\varphi_v$ in $\partial_0$ is at most one. So, there is a neighborhood of $\partial_0$ where we can write $\varphi_v\circ\exp_{\partial_0} = P$ with $P$ a homogeneous polynomial of degree one and $\exp_{\partial_0}$ the exponential map of $\Sigma$ in $\partial_0$. For $X \in T_{\partial_0}\Sigma$ we have
$$d(\varphi_v)_{\partial_0}(X) = d(\varphi_v\circ \exp_{\partial_0})_0(X) = P(X),$$
which gives us $\nabla \varphi_v(\partial_0) \not = 0$, since the degree of $P$ is one. Define a linear map $T : \mathcal{C} \rightarrow T_{\partial_0}\Sigma$ by $T(v) = v^T$, where $v^T$ is the projection of $v$ in the tangente space $T_{\partial_0}\Sigma$. Since the dimension of $\mathcal{C}$ is greater than the dimension of $T_{\partial_0}\Sigma$, there is a non-zero vector $v \in \mathcal{C}$ such that $T(v) = 0$, which implies that $\nabla\varphi_v(\partial_0) = 0$, a contradiction. On the other hand, $\partial\Sigma = \Sigma\cap\partial\mathbb{B}^3_\varepsilon(r)$, which implies that $\partial_0$ cannot be in $\partial\Sigma$. So, we conclude that $\partial_0\not\in\Sigma$. In particular, the map $A_\Sigma : \Sigma\rightarrow \Sigma$ does not have fixed points, since $\partial_0$ is the only fixed point of the antipodal map in $\mathbb{B}^3_\varepsilon(r)$.




Step 2: using again the invariance of $\Sigma$, we will show that $A_\Sigma : \Sigma \rightarrow \Sigma$ reverses the orientation of $\Sigma$. As usual, denote $p = (x,y)$. Consider $V$ a unit normal vector field to $\Sigma$ that is tangent to $\mathbb{E}^n_\varepsilon$. Define $f : \Sigma\rightarrow \mathbb{R}$ by $f(p) = \overline{g}_\varepsilon (V_p,\hat{N}_p)$, where $\hat{N}$ is the vector field defined by
$$\hat{N}_p = \frac{1}{||y||}(-\varepsilon||y||^2\partial_0 + xy^i\partial_i).$$

Since $\hat{N}\vert_{\partial\mathbb{B}^3_\varepsilon}$ is the unity outward normal vector field to $\partial\mathbb{B}^3_\varepsilon(r)$ it is immediate $f\vert_{\partial\mathbb{B}^3_\varepsilon} \equiv 0$. We will prove that $f(p) \not = 0$ for any $p \in \mathrm{int}\,\Sigma$. By contradiction, assume $f(p) = 0$ for a $p \in \mathrm{int}\,\Sigma$. Define $S : {\mathcal{C}}\rightarrow T_p\Sigma$ by $S(v) = v^T$, where $v^T$ is the projection of $v$ in the tangent space $T_p\Sigma$. As before, there is a non-zero vector $v \in {\mathcal{C}}$ such that $S(v) = 0$. So, $T_p^\perp\Sigma \subset\mathrm{span}\{v,p\}$ and, in particular, $V_p = v$. Identifying $v$ as $(0,v)$, the condiction $f(p) = 0$ implies that $x = 0$, which cannot happen, or $v\cdot y = 0$, where $p = (x,y)$ and $\cdot$ is the usual inner product of $\mathbb{R}^3$. So, we get that $\varphi_v(p) = 0$. Arguing as in Step 1, there is a homogeneous polynomial of degree one such that $d(\varphi_v)_p(X) = P(X)$, for any $X \in T_p\Sigma$. But $\nabla\varphi_v(p) = 0$, a contradiction. This means that we can choose the vector field $V$ in such a way that $f$ is nonzero on $\mathrm{int}\,\Sigma$. Now, we can use stereographic projection in the case $\varepsilon = 1$, or hyperbolic stereographic projection in the case $\varepsilon = -1$, to bring $\Sigma$ onto $\Sigma'$, a embedded surface in a ball, which satisfies the free boundary condition. Since $f\vert_{\mathrm{int}\,\Sigma}$ is nonzero, we can choose a unit normal vector field $V'$ for $\Sigma'$ such that $p\cdot V'$ is positive on $\mathrm{int}\,\Sigma'$. So, we are in the same situation as in the proof of \cite[Proposition 8.1]{Fr-Sc}. In particular, each geodesic starting from $\partial_0$ meets $\Sigma$ at most once, then, projecting $\Sigma$ into $\partial\mathbb{B}^3_\varepsilon(r)$ we obtain that $A$ reverses the orientation of $\Sigma$.

Step 3: the Lemma \ref{CisAodd} guarantees that for all $v \in \mathcal{C}$ the eingenfunction $\varphi_v$ is $A$-odd. Define $\tilde{\mathcal{C}}$ as the dual space of $\mathcal{C}$ via the canonial isomorphism $v \mapsto \phi_v$. We have that $\tilde{\mathcal{C}}$ is a three-dimensional space formed by $A$-odd functions with exactly two nodal domains and $A_\Sigma : \Sigma\rightarrow \Sigma$ is an involutive isometry without fixed points that reverses the orientation of $\Sigma$. Since $\Sigma$ is an annulus, we conclude, from the Theorem \ref{generale1c}, that $\mathcal{E}_1 = \widetilde{\mathcal{C}}$. This guarantees that the coordinate functions $\varphi_1$, $\varphi_2$ and $\varphi_3$ are eigenfunctions associated with the first eigenvalue of the Steklov problem with frequency. We are then left with the hypotheses of Theorem \ref{LMM}, which concludes the proof. 
\end{proof}

\end{document}